\documentclass[pdflatex,sn-mathphys]{JORSC-jnl}

\usepackage[T1]{fontenc}
\usepackage[utf8]{inputenc}
\usepackage{microtype}
\usepackage{nicefrac}
\usepackage{array}
\definecolor{orcidcolor}{HTML}{A6CE39}
\newcommand{\orcid}[1]{%
  \href{https://orcid.org/#1}{\textsuperscript{\textcolor{orcidcolor}{\textsf{\bfseries iD}}}}%
}

\makeatletter
\def\@acknotetext{}
\newcommand{\acknote}[1]{\gdef\@acknotetext{#1}}
\let\sn@orig@printabstract\printabstract
\def\printabstract{%
  \ifx\@acknotetext\@empty\else
    \par\addvspace{12pt}%
    \begingroup
      \leftskip=24pt\rightskip=24pt%
      \parfillskip=0pt plus 1fil\relax
      \small\noindent\@acknotetext\par
    \endgroup
    \addvspace{12pt}%
  \fi
  \sn@orig@printabstract%
}
\makeatother

\algrenewcommand\algorithmicrequire{\textbf{Input:}}
\algrenewcommand\algorithmicensure{\textbf{Output:}}

\theoremstyle{thmstyleone}
\newtheorem{theorem}{Theorem}[section]
\newtheorem{lemma}[theorem]{Lemma}

\newtheorem{prop}[theorem]{Proposition}

\theoremstyle{thmstylethree}
\newtheorem{definition}[theorem]{Definition}
\newtheorem{remark}[theorem]{Remark}

\newtheorem{example}[theorem]{Example}

\newcommand{\E}{\mathbb{E}}

\begin{document}

\title[Promoting Fair Online Resource Allocation with Indivisible Units]{Promoting Fair Online Resource Allocation with Indivisible Units}

\author[1]{\fnm{Igor} \sur{Averbakh}\orcid{0000-0001-6168-4424}}\email{igor.averbakh@rotman.utoronto.ca}
\author*[2]{\fnm{Hong-Yi} \sur{Jiang}\orcid{0000-0001-9840-078X}}\email{hongyi.jiang@cityu.edu.hk}
\author[3]{\fnm{Samitha} \sur{Samaranayake}\orcid{0000-0002-5459-3898}}\email{samitha@cornell.edu}
\author[4,5]{\fnm{A-Kang} \sur{Wang}\orcid{0000-0002-3325-8441}}\email{wangakang@sribd.cn}
\author[4]{\fnm{Jiang-Hua} \sur{Wu}\orcid{0000-0002-1394-6618}}\email{wujianghua@sribd.cn}

\affil[1]{\orgdiv{Department of Management, University of Toronto Scarborough, and Rotman School of Management}, \orgname{University of Toronto}, \orgaddress{\city{Toronto}, \country{Canada}}}
\affil[2]{\orgdiv{Department of Systems Engineering}, \orgname{City University of Hong Kong}, \orgaddress{\city{Hong Kong}, \country{China}}}
\affil[3]{\orgdiv{School of Civil and Environmental Engineering}, \orgname{Cornell University}, \orgaddress{\city{Ithaca}, \state{NY}, \country{USA}}}
\affil[4]{\orgname{Shenzhen Research Institute of Big Data}, \orgaddress{\city{Shenzhen}, \country{China}}}
\affil[5]{\orgdiv{School of Data Science}, \orgname{The Chinese University of Hong Kong, Shenzhen}, \orgaddress{\city{Shenzhen}, \country{China}}}

\abstract{Allocating scarce, indivisible resources to diverse groups under uncertainty is a central challenge in operations research, where efficiency-focused methods often underserve marginalized populations. We study the Fair Online Resource Allocation with Indivisible Units (FORA-IU) problem, in which an unpredictable sequence of demands must be served from a strictly fixed inventory, and ask what fairness guarantees are achievable under different distributional and structural assumptions. 
We adopt a fairness criterion based on the expected filling ratio ($FE\text{-}FR\text{-}\beta$), which balances each group's expected allocation against its expected demand and priority weight. We design online policies that calibrate acceptance probabilities to the remaining budget, analyze both arbitrary time-varying and stationary arrivals, introduce the Random Cyclic Blocks (RCB) algorithm tailored to the stationary case, and study the effect of restricting policies to all-or-nothing allocations. 
For arbitrary time-varying arrivals, our policy achieves the optimal universal fairness guarantee of $\frac{1}{1+R_\beta}$, where $R_\beta$ denotes the priority-weighted system load. For time-invariant arrivals, RCB achieves the exact finite-horizon guarantee $[1-(1-R_\beta/T)^T]/R_\beta$, which is at least $(1-e^{-R_\beta})/R_\beta$ and is also tight. We further show that all-or-nothing allocation policies cannot match these guarantees. 
These findings demonstrate that distributional stationarity strictly improves the fairness frontier, and that partial fulfillment is a necessary condition for attaining optimal fairness in online indivisible resource allocation.}

\keywords{online resource allocation; fairness; indivisible resources; stochastic demand; online algorithms}

\pacs[MSC Classification]{68W27, 91B32}

\acknote{Igor Averbakh was supported by the Natural Sciences and Engineering Research Council of Canada Discovery Grant (No. RGPIN-2018-05066). \\Hong-Yi Jiang was supported by the National Natural Science Foundation of China (No. 72501239). \\A-Kang Wang was supported by the National Natural Science Foundation of China (No. 12301416).\\The authors thank Prof. Pan Xu from the New Jersey Institute of Technology for helpful discussions at an early stage of this work.}

\maketitle

\section{Introduction}

Allocating scarce resources under uncertainty is a central problem in operations research. Over the past decade, online resource allocation problems have attracted significant interest. In these settings, requests arrive sequentially with stochastic demands, and the allocator must make immediate decisions without knowledge of future arrivals. Such problems arise in diverse domains, from cloud computing \cite{du2022adversarial} to humanitarian logistics \cite{manshadi2023fair}. Traditional approaches prioritize efficiency, aiming to maximize total utility over the planning horizon. However, growing evidence shows that efficiency-focused policies, when deployed without explicit fairness constraints, tend to amplify existing societal inequities \cite{obermeyer2019dissecting}.

The mechanism behind this inequity is intuitive. Algorithms that optimize for throughput naturally favor requests that are easiest to serve. In practice, this means prioritizing groups with better access to infrastructure, information, or flexibility. Marginalized populations often lack the ability to submit requests quickly or at favorable times. Consequently, they are underserved \cite{samorani2022overbooked}. The resulting disparities are not merely theoretical. They produce measurable and severe outcomes. Studies of post-disaster relief show that resource distribution often widens pre-existing wealth gaps between demographic groups \cite{howell2019damages}. Similarly, during public health emergencies, vulnerable populations frequently receive disproportionately small shares of critical medical supplies despite facing higher mortality risks \cite{ne2022identifying}. These findings underscore a critical point. Different groups have fundamentally different levels of urgency. Achieving equitable outcomes requires allocation mechanisms that actively enforce proportional service guarantees.

Designing such mechanisms, however, faces a fundamental technical barrier. Many resources of practical interest cannot be subdivided arbitrarily. Ventilators, food boxes, and housing units must be allocated as whole items, unlike bandwidth or monetary transfers. This \emph{discreteness} creates an inherent tension with fairness objectives. When two groups compete for a single indivisible unit, the allocator must grant the entire unit to one group while denying the other completely. No intermediate compromise is possible within a single allocation instance.

Another key observation is that many real-world allocation scenarios are not isolated events but repetitive processes. A food bank distributes resources daily. A disaster relief agency operates across multiple distribution cycles. In such settings, we can leverage the Law of Large Numbers to shift our fairness objective from individual realizations to long-run expectations. By designing policies that achieve fairness in expectation, we ensure that realized allocations converge to their equitable shares as the number of allocation episodes grows. This approach bridges the gap between the rigid physical constraints of indivisibility and the systemic equity required to serve diverse populations fairly over time.

Motivated by these considerations, we study the \emph{Fair Online Resource Allocation with Indivisible Units} (FORA-IU) problem. Our framework assigns priority weights to each group, encoding heterogeneous urgency levels. Higher-risk populations receive service guarantees proportional to their weighted importance. We also depart from standard all-or-nothing constraints, where requests are rejected entirely if they cannot be fully satisfied. Instead, our approach permits \textit{partial allocation}. By allowing the fulfillment of a subset of each request, we maximize the achievable fairness guarantee. This prevents high-priority groups from being penalized simply because their exact requests do not perfectly match the available inventory.

\subsection{Problem Formulation}

We now formalize the FORA-IU problem. Consider a resource allocator managing a budget of $K$ indivisible units (e.g., complete medical kits or pallets of water) over a finite horizon of $T$ time slots (e.g., days in a disaster response period). The population served by the allocator is partitioned into $N$ disjoint groups, which could represent separate relief zones or different levels of need, such as the elderly versus the general public.

To capture the heterogeneous urgency discussed above, we assign a priority weight $\beta_i \in (0, 1]$ to each group $i \in [N]$. Without loss of generality, we normalize these weights so that $\max_{i \in [N]} \beta_i = 1$.

At each time slot $t \in [T]$, the allocator receives and handles at most one resource request. The probability that a request arrives from group $i$ during slot $t$ with a demand of $j \in [K]$ units is denoted by $p_{itj} \geq 0$. Arrivals are independent across time slots, and the total arrival probability in any slot satisfies $\sum_{i=1}^N\sum_{j=1}^K p_{itj} \leq 1$. We let $S_{itj}$ denote the indicator random variable equal to $1$ if a request from group $i$ with demand $j$ arrives at time $t$, and $0$ otherwise.

Upon the arrival of a request, the allocator must immediately decide the quantity $C_{it}$ of resources to allocate. Three constraints govern this decision, reflecting the physical properties of the resources and the online nature of the problem.
\begin{enumerate}
    \item $C_{it}$ must be determined without knowledge of future arrivals.
    \item $C_{it}$ must be a nonnegative integer. Crucially, we permit \textit{partial allocation}, meaning the allocator may fulfill any subset of the demand, so that $0 \leq C_{it} \leq \sum_{j=1}^K S_{itj} \cdot j$.
    \item The total allocation $\sum_{t=1}^T \sum_{i=1}^N C_{it}$ cannot exceed the budget $K$.
\end{enumerate}

We measure the fairness of an allocation policy using the \textbf{Fair Expected Filling Ratio with priority coefficients} (FE-FR-$\beta$). A (randomized) policy achieves an FE-FR-$\beta$ of at least $\alpha \in [0,1]$ if, for every group $i \in [N]$,
\begin{align}\label{eq:UE-FR-beta}
\E\left[\sum_{t=1}^T C_{it}\right] \geq \alpha \cdot \beta_i \cdot \E\left[\sum_{t=1}^T \sum_{j=1}^K S_{itj} \cdot j\right].
\end{align}
This metric guarantees that a group with priority $\beta_i$ receives, in expectation, at least a fraction $\beta_i \alpha$ of its expected demand.

To quantify the scarcity of resources relative to demand, we define the \emph{priority-weighted load parameter} $R_\beta$ as the ratio of total expected weighted demand to the available capacity $K$,
\begin{align}\label{eq:R-beta}
    R_\beta := \frac{1}{K} \sum_{t=1}^T \sum_{i=1}^N \sum_{j=1}^K \beta_i \cdot p_{itj} \cdot j.
\end{align}
This parameter measures the \textbf{effective system load}, capturing the aggregate pressure on the budget after discounting requests from lower-priority groups.

Our objective is to design an online policy that \emph{maximizes $\alpha$} as a function of the instance parameter $R_\beta$.

\subsection{Justification for the Expected Fairness Metric}

A natural alternative is to use the ex-ante minimum fill-rate notion of \citet{manshadi2023fair}, specialized to our group-level setting. Since group \(i\) may generate multiple requests over the horizon, define its realized fill rate by
\[
F_i :=
\begin{cases}
1, & D_i=0,\\[2mm]
\dfrac{A_i}{D_i}, & D_i>0,
\end{cases}
\qquad
\text{where }
A_i := \sum_{t=1}^T C_{it},
\quad
D_i := \sum_{t=1}^T\sum_{j=1}^K S_{itj}\cdot j.
\]
The corresponding ex-ante metric is
\(
\min_{i\in[N]} \E[F_i].
\)

We argue that this metric is not suitable in our setting. The difficulty is that, by convention, every realization with \(D_i=0\) contributes a value of \(1\). Hence a group that is never served when it actually has positive demand may still appear almost perfectly treated if its total demand is zero with high probability.

This pathology already appears in a simple unweighted instance with \(\beta_i=1\) for all \(i\). Let \(N=2\), and suppose
\(
D_1=1 
\) almost surely,
while
\[
D_2=
\begin{cases}
K-1, & \text{with probability } \frac{1}{K-1},\\
0, & \text{with probability } 1-\frac{1}{K-1}.
\end{cases}
\]
Then
\[
\E[D_1]=1,
\qquad
\E[D_2]=\frac{1}{K-1}\cdot (K-1)=1,
\]
so the two groups have the same expected demand. Now consider the feasible policy \(\pi\) that allocates one unit to group \(1\) and zero units to group \(2\). Under the ex-ante fill-rate metric from \cite{manshadi2023fair},
\[
\E[F_1]_{\pi}=1,
\qquad
\E[F_2]_{\pi}
=
\left(1-\frac{1}{K-1}\right)\cdot 1
+
\frac{1}{K-1}\cdot 0
=
1-\frac{1}{K-1}.
\]
Hence
\[
\min_{i\in[2]} \E[F_i]_{\pi}
=
1-\frac{1}{K-1}
\to 1
\qquad \text{as } K\to\infty.
\]
Thus this metric certifies the policy as nearly perfectly fair, even though group \(2\) is never served.

By contrast, under Eq.~\eqref{eq:UE-FR-beta},
\[
\frac{\E[A_1]}{\E[D_1]}=1,
\qquad
\frac{\E[A_2]}{\E[D_2]}=0.
\]
So the expectation-based metric correctly identifies that one group receives none of its expected demand despite having the same expected demand as the other group. For this reason, Eq.~\eqref{eq:UE-FR-beta} is the more appropriate fairness notion in our stochastic online setting.

One could also ask for the stronger ex-post requirement
\[
A_i(\omega)\ge \alpha\,\beta_i\,D_i(\omega)
\qquad \text{for every realization } \omega.
\]
This is even less suitable. For example, take \(K=1\), \(T=2\), \(\beta_1=\beta_2=1\), and suppose group \(1\) requests one unit at \(t=1\) while group \(2\) requests one unit at \(t=2\). Then \(D_1=D_2=1\), but the total budget is only \(1\). Hence, for any \(\alpha>0\), no policy can satisfy the ex-post requirement for both groups simultaneously.

The expectation-based metric overcomes this limitation by exploiting the repetitive structure inherent in FORA-IU applications. Consider a food bank that distributes a fixed budget of $K$ food boxes daily. Here, the finite horizon $T$ corresponds to the operating time slots of a single day, and the food bank operates over a sequence of $M$ independent days. On any given day $m$, the realized allocation $A_i^{(m)}$ might be zero for a high-priority group due to the indivisibility constraints described above. However, the relevant fairness criterion concerns cumulative service over the long run.

Let $(A_i^{(m)}, D_i^{(m)})$ denote the allocation and demand totals on day $m$. By the Strong Law of Large Numbers, the empirical averages converge almost surely to their expectations. Provided $\E[D_i] > 0$, we have
\[
\frac{\sum_{m=1}^M A_i^{(m)}}{\beta_i\sum_{m=1}^M D_i^{(m)}}
=
\frac{\frac{1}{M}\sum_{m=1}^M A_i^{(m)}}{\beta_i\cdot \frac{1}{M}\sum_{m=1}^M D_i^{(m)}}
\xrightarrow{\mathrm{a.s.}}
\frac{\E[A_i]}{\beta_i\E[D_i]}
\;\ge\;\alpha.
\]
Thus, the FE-FR-$\beta$ constraint delivers an \emph{asymptotic ex-post guarantee} on the realized long-run filling ratio without requiring an arbitrary and problematic slack parameter $\Delta$.

This long-run perspective is consistent with recent literature on fairness in online resource allocation. \citet{ma2024promoting} adopt the same philosophy in their long-run fairness notion FAIR-L for online bipartite matching, defined as $\min_{j \in J} \E[X_j]/\E[A_j]$. They treat each horizon as a single day and audit the algorithm across many independent days, so that the empirical fairness converges almost surely to this ratio of expectations. They likewise observe that any deterministic policy collapses to zero short-run (ex-post) fairness at peak load, mirroring the indivisibility dilemma we describe above. The two metrics nevertheless differ in scope. FAIR-L governs single-unit matching with all online types treated symmetrically, whereas our FE-FR-$\beta$ handles multi-unit indivisible allocation with partial fulfillment and embeds heterogeneous priority weights $\beta_i$ directly into the fairness target. This yields a load-parameterized optimal guarantee that adapts to the effective system load $R_\beta$ without invoking arbitrary slack parameters that become vacuous under small demands.

\section{Contributions}

We study FORA-IU, an online demand-rationing problem with a hard budget of indivisible units, stochastic multi-unit requests, partial fulfillment, and heterogeneous priority weights. Our main contribution is a tight characterization of the best achievable FE-FR-$\beta$ guarantee as a function of the priority-weighted load $R_\beta$. Specifically, our contributions are threefold.

\begin{enumerate}
    \item For arbitrary time-varying arrival distributions $(p_{itj})$, we design an online policy (Algorithm~\ref{alg:weighted-fora}) that calibrates randomized acceptance through the state-dependent quantity
    \(
    \gamma_{tj}=\mathbb{E}[\min(B_t/j,1)],
    \)
    where $B_t$ is the residual budget under the policy itself and $\gamma_{tj}$ is computed by a forward dynamic program. To analyze this policy, we introduce a strengthened per-arrival criterion, the \emph{Refined Fair Expected Filling Ratio} (Definition~\ref{def:RFE-FR}), and show that the policy attains the optimal FE-FR-$\beta$ guarantee of $\frac{1}{1+R_\beta}$ for this class (Theorem~\ref{thm:weighted-fora}). A matching lower bound (Theorem~\ref{thm:optimal-rFE-FR-beta-active}) shows that no online policy can do better on this class of instances.

\item For the time-invariant case, we propose the \emph{Random Cyclic Blocks} (RCB) algorithm (Algorithm~\ref{alg:weighted-rcb}), which places the $K$ indivisible units on a cycle and serves each request by claiming a uniformly rotated contiguous block. A rotational-symmetry argument reduces the analysis to a single unit and yields the load-only FE-FR-$\beta$ guarantee
\(
\frac{1-e^{-R_\beta}}{R_\beta},
\)
which strictly improves on the optimal universal guarantee $\frac{1}{1+R_\beta}$. A sharper exact finite-horizon characterization and a matching upper bound are established later in the section, showing that this improvement is precisely due to time-invariant arrivals.

    \item We prove a structural separation between allocation protocols. Through explicit hard instances, Theorem~\ref{thm:limit-all-or-nothing} shows that policies restricted to all-or-nothing allocations cannot match either of our optimal guarantees, in both the general and the time-invariant settings. Partial fulfillment is therefore essential rather than a modeling convenience.
\end{enumerate}

\section{Related Work}\label{sec:related}

Resource allocation has a long history in offline and static settings. Classical models allocate a fixed
amount of resources across competing activities, typically with continuous or integer decision variables,
and have been surveyed from algorithmic and nonlinear-optimization perspectives
\citep{patriksson2008survey}. This literature also includes fair and decentralized variants, such as
multiagent resource allocation \citep{chevaleyre2006issues} and game-theoretic allocation in urban
systems \citep{krylatov2024competitive}. These works provide broad background on resource allocation,
but they do not address the online arrivals, stochastic demand, and indivisible supply that are central to
FORA-IU.

Sequential allocation settings, in which requests or items arrive over time and decisions are irrevocable,
are therefore more directly relevant to our model. Within this body of work, the literature closest to
FORA-IU lies at the intersection of group-based fairness in online allocation, sequential rationing under
uncertain demand, dynamic fair division, and a smaller set of models especially close to our fairness
objective.

Recent work on online matching and allocation has emphasized fairness across protected or otherwise salient groups, often through class-level constraints or group-specific performance guarantees~\citep{ijcai2021p53,hosseini2024class,ma2024promoting,zargari2025online}. Parallel work on societal platforms such as ridesharing and public transportation designs policies that satisfy explicit equity targets while preserving system objectives such as efficiency or profit~\citep{nanda2020balancing,pramanik2023equity,xu2024exploring}. This literature shares with our setting the view that fairness must be enforced dynamically during the allocation process rather than evaluated only ex post.

A complementary stream studies sequential rationing and stochastic service systems, where a limited budget must be distributed over a horizon to meet uncertain demand. Such models arise naturally in nonprofit and humanitarian operations, where fairness is often expressed through service levels or fill rates under horizon-wide feasibility constraints~\citep{lien2014sequential,fadaki2025sequential,wang2019measuring}. \citet{mertzanidis2024automating} develop an automated food-rescue system that matches arriving donation truckloads to food banks; in their model each truckload is an indivisible item that must be assigned upon arrival, whereas FORA-IU allocates from a shared homogeneous budget to arriving requests. In the algorithmic literature, fair dynamic rationing examines how a fixed inventory should be used over time under stochastic demand while maintaining equity-style guarantees, often by comparing realized service against benchmark allocations~\citep{balseiro2021regularized,manshadi2023fair}. Related work formulates sequential fair allocation as a Markov decision process and optimizes fairness-aware objectives under uncertainty~\citep{fianu2018markov,hassanzadeh2023sequential}. Other variants incorporate monetary transfers~\citep{fallah2024fair}, stochastic utilities~\citep{sinclair2022sequential}, or operational features such as perishing resources and replenishments~\citep{banerjee2023online,onyeze2025replenishments}. Our model instead focuses on integer multi-unit demands drawn from a fixed budget of indivisible units, allows partial fulfillment, and incorporates heterogeneous group priorities.

Our work is also related to online and dynamic fair division, where fairness is usually defined through proportionality- or envy-based notions as agents or items arrive over time. Representative models include online cake cutting for divisible resources~\citep{walsh2011online}, food-bank motivated online allocation~\citep{aleksandrov2015online}, and dynamic fair division with evolving populations or limited reassignment~\citep{kash2014no,he2019achieving,zeng2020fairness}. A large recent literature studies settings in which heterogeneous items arrive sequentially or agents arrive sequentially to claim items from an upfront pool~\citep{benade2024fair,halpern2025online,kulkarni2025online}. These models typically evaluate fairness through envy-based or share-based guarantees such as envy-freeness up to one good or maximin share, comparing bundles of distinct items across uniformly treated agents. By contrast, FORA-IU concerns stochastic requests from fixed groups with possibly different priority levels, each requesting multiple units from a shared homogeneous budget, and fairness is measured by priority-weighted fill rates rather than inter-agent envy over heterogeneous bundles. Several nearby models also differ from ours in other essential ways. \citet{cookson2025temporal} assume full upfront knowledge of future item arrivals, \citet{golz2025apportionment} study one-shot randomized seat apportionment rather than dynamic rationing, and \citet{elkind2024fair} impose budget constraints on agents rather than on the allocator.

These differences make FORA-IU closer to a demand-rationing model than to standard online fair division. Its distinctive features are that requests are stochastic, supply is a hard budget of indivisible units, requests may ask for multiple units, partial fulfillment is allowed, and fairness is defined at the group level with heterogeneous priority weights~$\beta_i$. To the best of our knowledge, this combination has not been studied previously. The closest technical comparisons therefore come from simplified settings that capture only part of the model.

A natural simplification is the unit-demand case in which every request has size $j=1$. When $\beta_i=1$ for all groups, FE-FR reduces to a group-wise acceptance-rate objective. \citet{srinivasan2022generalized} analyze budgeted online fairness within the generalized magician problem framework under unknown arrival distributions, where the arrival law is never revealed to the algorithm. For unknown i.i.d.\ arrivals, they show that a simple greedy algorithm that accepts any item whenever budget remains is optimal; for the magician special case, in which all items are Bernoulli, they derive an exact closed-form expression for the optimal fairness in terms of the budget $B$. For unknown adversarial distributions, where arrival probabilities may vary arbitrarily over time, they give a near-optimal non-adaptive guarantee that vanishes as $B$ grows. Their results therefore apply only to a special case of FORA-IU and do not address multi-unit demands, partial fulfillment, or heterogeneous priority weights. \citet{ma2024promoting} study online bipartite matching under known stationary Poisson arrivals, where the objective is to maximize the minimum service rate across online types. Each arriving agent has unit demand and offline agents have finite serving capacities. They show that first-come-first-served is optimal when there is a single offline agent, and for the general case they propose a sampling-based algorithm whose fairness guarantee is at least $1-1/e$ and approaches one in several practically relevant regimes. Even in special cases, however, the bipartite matching structure differs fundamentally from FORA-IU's shared-budget model with variable multi-unit requests.

Another prior work that is close to our fairness notion is \citet{SankararamanSrinivasanXu2024}, which studies external equity promotion with random nonnegative demands and fairness measured through filling ratios $X_i/D_i$, motivated by applications including refugee resettlement, public housing, emergency aid, food donation, medical supply allocation, ride-hailing, and vaccine distribution. Under independent demands and ex-ante fairness, they obtain LP-based policies achieving a competitive ratio of $1/2$ relative to a benchmark LP, together with additional unconditional results, including an optimal policy when each demand is either $0$ or at least $1$, and a $(1-\varepsilon)$-competitive policy under ex-post fairness when every demand is bounded by $K\varepsilon^2/\ln(1/\varepsilon)$ for an absolute constant $K$. The conceptual link to our work is clear, since both models use filling ratios as the fairness criterion, but the technical settings differ substantially. Their supply is infinitely divisible and normalized to one rather than a hard budget of indivisible units, they do not consider group priority weights $\beta_i$, and their guarantees are competitive ratios against a clairvoyant offline benchmark rather than absolute load-dependent fairness bounds. The two sets of results are therefore complementary rather than directly comparable.

Our algorithms are also related to existing randomized online policies at the methodological level. Algorithm~\ref{alg:weighted-fora} and \citet{MaXuXu2023} both pursue fairness maximization through randomized online policies, but the underlying mechanisms are quite different. \citet{MaXuXu2023} use an LP-based sampling framework with boosting and, in their strongest group-fairness algorithm, Monte Carlo simulation-based attenuation. Our time-varying policy instead applies a priority-based Bernoulli screening step followed by the direct threshold
\(
\frac{1}{(1+R_\beta)\gamma'_{tj}},
\)
where $\gamma'_{tj}=\mathbb{E}[\min(B'_t/j,1)]$ is precomputed exactly by dynamic programming on the virtual instance $\mathcal{I}'$. Algorithm~\ref{alg:rcb} is also related in spirit to the online correlated selection framework of \citet{FahrbachHuangTaoZadimoghaddam2022}, since both replace greedy allocation with structured randomization over a symmetric domain. The resemblance is only at a high level, however. OCS creates temporal negative correlation across rounds involving the same offline vertex, boosting the probability that a vertex appearing in $k$ pairs is matched at least once from $1-2^{-k}$ to $1-2^{-k}(1-\gamma)^{k-1}$ and breaking the long-standing $1/2$ competitive-ratio barrier for edge-weighted online bipartite matching under free disposal. RCB instead uses spatial symmetry on a cycle: a uniform random rotation makes every unit equally exposed to future requests, and first-come-first-served consumption of contiguous blocks yields group-level filling-ratio guarantees.

Taken together, the existing literature captures important parts of the setting but not the full model studied here. To the best of our knowledge, no prior work jointly handles fixed groups with heterogeneous priority weights $\beta_i$, stochastic multi-unit requests, indivisible supply, and partial fulfillment over a finite horizon. FORA-IU fills this gap and provides tight load-dependent fairness guarantees of $\frac{1}{1+R_\beta}$ for time-varying arrivals and a strictly stronger load-only guarantee of $\frac{1-e^{-R_\beta}}{R_\beta}$ for time-invariant arrivals, with a sharper exact finite-horizon characterization given by $\frac{1-(1-R_\beta/T)^T}{R_\beta}$.

\section{Optimal Online Policy for Time-Varying Arrivals}\label{sec:general} 

We propose an optimal online policy for the FORA-IU problem where arrival distributions $(p_{itj})$ vary arbitrarily over time slots. This policy achieves an FE-FR-$\beta$ of $\frac{1}{1+R_\beta}$, which we prove to be optimal.

The section is organized as follows. We begin with the special case of uniform priorities ($\beta_i = 1$ for all groups). We then extend the policy to handle heterogeneous priority weights through a natural reduction that filters requests based on their priority levels. Finally, we establish a matching lower bound, confirming that our algorithm is optimal.

\subsection{The Homogeneous-Priority Case}

We first consider the special case where all groups share the same priority, i.e., $\beta_i = 1$ for all $i \in [N]$. In this setting, the priority-weighted load reduces to $R := \frac{1}{K} \sum_{t,i,j} p_{itj} \cdot j$, and the fairness metric FE-FR-$\beta$ is denoted as FE-FR for brevity. Our goal is to construct an online policy that attains an FE-FR of $\frac{1}{1+R}$ for all problem instances with same priorities.

To construct this policy, we target a strictly stronger, granular fairness condition. Specifically, we ensure that fairness is maintained locally at every arrival (in expectation). We term this stronger condition the \textbf{Refined Fair Expected Filling Ratio} (RFE-FR).

\begin{definition}\label{def:RFE-FR}
Given a FORA-IU instance, a (randomized) online policy is said to achieve an RFE-FR of at least $\alpha \in [0, 1]$ if, for every $i \in [N]$, $t \in [T]$, and $j \in [K]$ with $p_{itj}>0$,
\(
\mathbb{E}\left[C_{it} \mid S_{itj} = 1\right] \geq \alpha \cdot j,
\)
where the expectation is taken over the randomness of both the arrival process and the policy, conditioned on the specific arrival event $S_{itj}=1$.
\end{definition}

While FE-FR evaluates the expected allocation aggregated over the entire horizon, RFE-FR requires that the expected allocation be proportional to the demand \emph{conditional on each arrival event with positive probability}. Because the total expected allocation is the sum of these conditional expectations weighted by the corresponding arrival probabilities, the transition from local to global fairness is immediate.

\begin{lemma}\label{lem:sufficiency}
If an online policy attains an RFE-FR of $f(R)$ for a FORA-IU instance, it also achieves an FE-FR of at least $f(R)$ for that instance.
\end{lemma}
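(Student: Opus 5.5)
The plan is to decompose each group's total expected allocation over time slots and over the arrival events that can occur in a slot. Then we apply the RFE-FR inequality event by event and reassemble the sum to recover the FE-FR bound (with $\beta_i=1$, as in this subsection).

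Fix a group $i\in[N]$ and a slot $t\in[T]$. At most one request arrives per slot, so the events $\{S_{itj}=1\}$, $j\in[K]$, are pairwise disjoint. Let $A_{it}$ be their union. On the complement of $A_{it}$, group $i$ has no request at $t$, so the constraint $0\le C_{it}\le \sum_j S_{itj}\, j$ forces $C_{it}=0$. The law of total expectation then gives
\begin{align*}
\E[C_{it}] = \sum_{j\in[K]:\,p_{itj}>0} p_{itj}\,\E\left[C_{it}\mid S_{itj}=1\right].
\end{align*}
Terms with $p_{itj}=0$ contribute nothing to $\E[C_{it}]$, since $C_{it}\le \sum_j S_{itj}\,j$ and those events are null. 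This is exactly why Definition~\ref{def:RFE-FR} only needs to impose its condition when $p_{itj}>0$.

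Next, apply the RFE-FR hypothesis, $\E[C_{it}\mid S_{itj}=1]\ge f(R)\, j$, to each remaining term. This yields
\begin{align*}
\E[C_{it}] \ge f(R)\sum_{j} p_{itj}\, j = f(R)\,\E\left[\sum_j S_{itj}\, j\right].
\end{align*}
Summing over $t\in[T]$ and using linearity of expectation gives $\E[\sum_t C_{it}]\ge f(R)\,\E[\sum_t\sum_j S_{itj}\, j]$. This is inequality~\eqref{eq:UE-FR-beta} with $\beta_i=1$ and $\alpha=f(R)$, and it holds for every $i$.

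There is no real obstacle here. The only point needing care is justifying the decomposition: the arrival events within a slot are disjoint, and the allocation vanishes off those events. Both facts follow directly from the model's constraints.
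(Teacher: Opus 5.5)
Your proof is correct and follows the paper's route. The paper omits the details, saying only that the lemma follows from the law of total expectation summed over arrival events with positive probability. Your argument supplies those details faithfully: you decompose $\E[C_{it}]$ over the disjoint events $\{S_{itj}=1\}$, with $C_{it}=0$ off their union, then apply the RFE-FR bound event by event and sum over $t$.
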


The proof follows directly from the law of total expectation after summing over arrival events with positive probability, and is omitted.

Guided by this sufficiency result, we propose Algorithm~\ref{alg:non-iid}. The algorithm dynamically adjusts acceptance probabilities based on the distribution of the remaining budget, explicitly targeting the RFE-FR condition. First, we give an informal sketch of the policy and of the computation of policy-defining values. Once a request arrives in time slot $t$ from group $i$ with demand $j$, with probability $1 - \frac{1}{(1+R)\gamma_{tj}}$ the request is denied, and with probability $\frac{1}{(1+R)\gamma_{tj}}$ it is fulfilled as much as possible given the current remaining budget $B_t$ (that is, in the amount $\min (B_t, j)$). Here,  $\gamma_{tj}$ are values that are pre-computed in advance (before observing any realized demand) for all $t\in [T]$ and $j\in [K]$ and are not adjusted later based on the realized demand. This pre-computation is done by the dynamic programming Algorithm~\ref{alg:dynamic-beta} (see Appendix~\ref{sec:beta-calc}), starting with the boundary condition $B_1=K$ (hence $\gamma_{1k}=1$ for all $k\in[K]$) and expanding it consecutively for $t=2,3,\ldots,T$ to ensure that $\gamma_{tk}=\mathbb{E}[\min(B_t/k,1)]$, where $B_t$ denotes the remaining budget at the beginning of slot $t$ and the expectation is over the randomness of both the arrival process and the randomized fulfillment policy described above. This dynamic program also pre-computes the state probabilities $P(B_t=b)$ for $b\in\{0,\ldots,K\}$. 

\begin{algorithm}[h]
\caption{FORA-IU Algorithm for Unit Priorities}\label{alg:non-iid}
\begin{algorithmic}[1]
\Require Parameters $K$, $N$, $T$, and arrival probabilities $p_{itj}$. 
\Ensure Resource allocation decision $C_{it}$ for each time slot.
\State Pre-calculate $\gamma_{tj} := \mathbb{E}[\min(B_t / j, 1)]$ for each $t \in [T]$ and $j \in [K]$ using Algorithm~\ref{alg:dynamic-beta} (see Appendix~\ref{sec:beta-calc}).
\State Initialize $B_1 := K$ and $C_{it} := 0$ for all $i, t$.
\For{$t = 1$ \textbf{to} $T$}
    \If{a request arrives from group $i$ with demand $j$ ($S_{itj} = 1$)}
        \State Set the allocation $C_{it}$ according to the following randomized rule:
        \begin{align}\label{eq:non-iid-allocate-prob}
        C_{it} =
        \begin{cases}
            \min(B_t, j) & \text{with probability } \frac{1}{(1+R)\gamma_{tj}}, \\[0.5ex]
            0 & \text{with probability } 1 - \frac{1}{(1+R)\gamma_{tj}}.
        \end{cases}
        \end{align}
        \State (Note: Theorem~\ref{thm:non-iid} proves that $\gamma_{tj} \geq \frac{1}{1+R}$, ensuring this probability is valid.)
    \EndIf
    \State Set $B_{t+1} := B_t - \sum_{i=1}^N C_{it}$.
\EndFor
\end{algorithmic}
\end{algorithm}

\begin{theorem}\label{thm:non-iid}
Algorithm~\ref{alg:non-iid} is well-defined (i.e., $\gamma_{tj} \geq \frac{1}{1+R}$ for all $t, j$) and attains an RFE-FR of $\frac{1}{1+R}$. Consequently, by Lemma~\ref{lem:sufficiency}, it achieves an FE-FR of $\frac{1}{1+R}$.
\end{theorem}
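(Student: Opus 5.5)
The plan is to prove, by induction on $t$, that $\gamma_{tj}\ge \frac{1}{1+R}$ for all $j\in[K]$. Given this, the RFE-FR claim is immediate. Conditional on $S_{itj}=1$, the budget $B_t$ depends only on arrivals and coin flips in slots $1,\dots,t-1$, so it is independent of the slot-$t$ arrival event. Hence
\[
\mathbb{E}[C_{it}\mid S_{itj}=1]=\frac{1}{(1+R)\gamma_{tj}}\,\mathbb{E}[\min(B_t,j)]=\frac{j\,\gamma_{tj}}{(1+R)\gamma_{tj}}=\frac{j}{1+R}.
\]
The FE-FR conclusion then follows from Lemma~\ref{lem:sufficiency}. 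The whole difficulty is therefore well-definedness, that is, showing that every acceptance probability is at most $1$.

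For the induction, assume $\gamma_{sk}\ge\frac1{1+R}$ for all $s<t$ and all $k$, so the policy is well defined up to slot $t$. By the computation above, every arrival $(i,s,k)$ with $s<t$ consumes exactly $\frac{k}{1+R}$ units in expectation. Summing over arrivals gives
\[
\mathbb{E}[K-B_t]=\sum_{s<t}\sum_{i,k}p_{isk}\frac{k}{1+R}\le \frac{RK}{1+R},
\]
so $\mathbb{E}[B_t]\ge \frac{K}{1+R}$. This bounds only the mean, but $\gamma_{tj}=\mathbb{E}[\min(B_t/j,1)]$ is a truncated quantity, and for $j<K$ the bound $\mathbb{E}[B_t]/K$ does not transfer directly. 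For example, $\gamma_{tK}=\mathbb{E}[B_t]/K\ge\frac1{1+R}$ is fine, but for small $j$ we need an argument that exploits the structure of consumption.

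The key step, and the main obstacle, is to compare $\min(B_t,j)$ with $B_t$ scaled. I would use a per-unit view. Label the budget units $1,\dots,K$ and let allocations always consume the highest-numbered remaining units, so that unit $u$ survives to time $t$ iff $B_t\ge u$. Then
\[
\gamma_{tj}=\frac1j\sum_{u=1}^{j}\Pr(B_t\ge u),
\]
and since $\Pr(B_t\ge u)$ is nonincreasing in $u$, it suffices to show $\Pr(B_t\ge u)\ge\frac1{1+R}$ for every $u$, i.e.\ for $u=K$ it is not needed. Equivalently, it suffices to show that $\gamma_{tj}$ is nonincreasing in $j$ and that $\gamma_{tK}\ge\frac1{1+R}$. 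Monotonicity in $j$ holds pointwise, since $\min(b/j,1)$ is nonincreasing in $j$. Combined with $\gamma_{tK}=\mathbb{E}[B_t]/K\ge\frac1{1+R}$ from the previous paragraph, this gives $\gamma_{tj}\ge\gamma_{tK}\ge\frac1{1+R}$ for all $j$, which closes the induction.

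I will double-check that the base case $\gamma_{1j}=1$ holds and that the induction uses only slots $s<t$. It does: $\mathbb{E}[B_t]$ is determined by the well-defined decisions made before $t$.
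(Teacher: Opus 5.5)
Your proof is correct and takes essentially the same route as the paper. Both use the same induction on $t$, the same bound $\mathbb{E}[B_t]\ge \frac{K}{1+R}$ obtained from the per-arrival expected consumption $\frac{k}{1+R}$, and the same pointwise comparison $\min(B_t/j,1)\ge \min(B_t/K,1)=B_t/K$ (your monotonicity in $j$), which gives $\gamma_{tj}\ge \mathbb{E}[B_t]/K$. The ``obstacle'' you flag is not real, and you should drop the per-unit detour: the sufficient condition $\Pr(B_t\ge u)\ge\frac{1}{1+R}$ for every $u$ is false in general, and it is not equivalent to the monotonicity argument you actually use to close the induction.
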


\begin{proof}
We proceed in two steps. First, we prove that the algorithm is well-defined by showing that the probability term $\frac{1}{(1+R)\gamma_{tj}} \in [0,1]$. This is equivalent to showing $\gamma_{tj} \geq \frac{1}{1+R}$ for all $t, j$. Second, we verify that this condition implies the desired RFE-FR guarantee.

\medskip
\noindent
We use induction on $t$ to establish $\gamma_{tj} \geq \frac{1}{1+R}$.

\begin{itemize}
    \item \textbf{Base Case ($t=1$):} At the start, the budget is full, so $B_1 = K$ with probability 1. Thus, $\gamma_{1j} = \mathbb{E}[\min(K/j, 1)] = 1$ (since $j \le K$). Since $R \ge 0$, it holds that $1 \ge \frac{1}{1+R}$.

    \item \textbf{Inductive Step:} Assume that  for every $\tau < t$ and every $j \in [K]$, we have $\gamma_{\tau j} \geq \frac{1}{1+R}$, so the probability term $\frac{1}{(1+R)\gamma_{\tau j}}$ is valid. Then for every prior triple $(i,\tau,j)$ with $p_{i\tau j}>0$, conditioned on the event $S_{i\tau j}=1$, the algorithm allocates $\min(B_\tau,j)$ with probability $\frac{1}{(1+R)\gamma_{\tau j}}$ and $0$ otherwise. Since $B_\tau$ depends only on arrivals and allocations in slots $1,\ldots,\tau-1$, and arrivals are independent across time slots, $B_\tau$ is independent of the arrival event $S_{i\tau j}$ at slot $\tau$. Therefore,
\begin{align*}
        \mathbb{E}[C_{i\tau} \mid S_{i\tau j}=1]
    &= \frac{1}{(1+R)\gamma_{\tau j}} \cdot \mathbb{E}[\min(B_\tau,j)\mid S_{i\tau j}=1]\\
    &= \frac{1}{(1+R)\gamma_{\tau j}} \cdot \mathbb{E}[\min(B_\tau,j)]
    = \frac{j}{1+R},
\end{align*}
where the last equality uses the identity $\mathbb{E}[\min(B_\tau,j)] = j \cdot \gamma_{\tau j}$.
\end{itemize}

Now consider time slot $t$. Recall that $\gamma_{tj} := \mathbb{E}[\min(B_t/j, 1)]$. We observe two properties:
\begin{enumerate}
    \item Since $j \leq K$, we have $B_t/j \geq B_t/K$. Consequently, $\min(B_t/j, 1) \geq \min(B_t/K, 1)$.
    \item Since the remaining budget $B_t$ never exceeds the initial capacity $K$, $\min(B_t/K, 1) = B_t/K$.
\end{enumerate}
Combining these, we get $\gamma_{tj} \geq \mathbb{E}[B_t]/K$.

Next, we expand $\mathbb{E}[B_t]$. The expected remaining budget at time $t$ is the initial budget $K$ minus the expected consumption up to time $t-1$:
\begin{align*}
    \mathbb{E}[B_t] &= K - \sum_{\tau=1}^{t-1} \sum_{i=1}^N \mathbb{E}[C_{i\tau}] 
    = K - \sum_{\tau=1}^{t-1} \sum_{i=1}^N \sum_{\substack{k \in [K]\\ p_{i\tau k}>0}} P(S_{i\tau k}=1)\cdot \mathbb{E}[C_{i\tau}\mid S_{i\tau k}=1] \\
    &= K - \sum_{\tau=1}^{t-1} \sum_{i=1}^N \sum_{\substack{k \in [K]\\ p_{i\tau k}>0}} p_{i\tau k}\cdot \frac{k}{1+R}
    \quad \text{(by the induction hypothesis)}\\
    &\geq K - \sum_{\tau=1}^{T} \sum_{i=1}^N \sum_{k=1}^K p_{i\tau k} \cdot \frac{k}{1+R} 
    = K - \frac{1}{1+R} \left( \sum_{\tau=1}^{T} \sum_{i=1}^N \sum_{k=1}^K p_{i\tau k} \cdot k \right).
\end{align*}
Substituting the definition $R = \frac{1}{K} \sum_{\tau, i, k} p_{i\tau k} \cdot k$, we obtain:
\[
\mathbb{E}[B_t] \geq K - \frac{1}{1+R} (K \cdot R) = K \left( 1 - \frac{R}{1+R} \right) = \frac{K}{1+R}.
\]
Dividing by $K$, we confirm that $\gamma_{tj} \geq \mathbb{E}[B_t]/K \geq \frac{1}{1+R}$. This completes the induction.

\medskip
Next, we will derive the RFE-FR guarantee.
With the condition $\gamma_{tj} \geq \frac{1}{1+R}$ established, let $q_{tj} = \frac{1}{(1+R)\gamma_{tj}}$ be the valid probability defined in the algorithm. Fix any triple $(i,t,j)$ with $p_{itj}>0$.

Conditioned on the arrival event $S_{itj}=1$, the algorithm uses an independent Bernoulli coin: with probability $q_{tj}$, it allocates $\min(B_t,j)$, and otherwise it allocates $0$. Since $B_t$ depends only on arrivals and allocations in slots $1,\ldots,t-1$, and arrivals are independent across time slots, $B_t$ is independent of the arrival event $S_{itj}$ at slot $t$. Using the linearity of expectation, we obtain
\[
\mathbb{E}[C_{it} \mid S_{itj} = 1]
= q_{tj} \cdot \mathbb{E}[\min(B_t, j)\mid S_{itj}=1].
\]
By the above independence,
\[
\mathbb{E}[C_{it} \mid S_{itj} = 1]
= q_{tj} \cdot \mathbb{E}[\min(B_t, j)].
\]
Substituting $q_{tj} = \frac{1}{(1+R)\gamma_{tj}}$:
\[
\mathbb{E}[C_{it} \mid S_{itj} = 1] = \frac{1}{(1+R)\gamma_{tj}} \cdot \mathbb{E}[\min(B_t, j)].
\]
Recall the definition $\gamma_{tj} := \mathbb{E}[\min(B_t/j, 1)]$. Since $j$ is a constant, we have the identity $\mathbb{E}[\min(B_t, j)] = j \cdot \gamma_{tj}$. Substituting:
\[
\mathbb{E}[C_{it} \mid S_{itj} = 1] = \frac{1}{(1+R)\gamma_{tj}} \cdot (j \cdot \gamma_{tj}) = \frac{j}{1+R}.
\]
Thus, the algorithm attains an RFE-FR of $\frac{1}{1+R}$.
\end{proof}

\begin{remark}
Algorithm~\ref{alg:non-iid} relies on the pre-computed values $\gamma_{tj}$ to calibrate acceptance probabilities. These values are computed via Algorithm~\ref{alg:dynamic-beta} in Appendix~\ref{sec:beta-calc} using a polynomial-time dynamic programming approach.
\end{remark}

\subsection{Extension to Heterogeneous Priorities}\label{sec:time-varying-het}

In many applications, enforcing identical service guarantees across all groups may be neither desirable nor equitable. For instance, in emergency medical supply distribution, high-risk populations often require prioritized access compared to the general public; in food bank operations, families with children might be prioritized over single adults. To accommodate such heterogeneous needs, we now extend our framework to handle arbitrary priority vectors $\beta = (\beta_1, \ldots, \beta_N) \in (0,1]^N$, normalized so that $\max_{i \in [N]} \beta_i = 1$.

The key insight is that priority weights can be incorporated through a reduction that filters the incoming request stream based on priority levels. Specifically, upon receiving a request from group $i$, we first perform a randomized screening step that accepts the request with probability $\beta_i$. Rejected requests receive zero allocation immediately. Accepted requests are then processed by our optimal unit-priority algorithm (Algorithm~\ref{alg:non-iid}), but tuned to the reduced \emph{virtual load} $R_\beta$ rather than the raw load $R$.

This two-stage procedure effectively transforms the original instance into a virtual instance where the arrival probability of a request from group $i$ with demand $j$ is reduced from $p_{itj}$ to $\beta_i \cdot p_{itj}$. The expected load on this virtual instance is precisely $R_\beta$, the priority-weighted load parameter defined in~\eqref{eq:R-beta}.

\begin{algorithm}[h]
\caption{Weighted FORA-IU Algorithm}\label{alg:weighted-fora}
\begin{algorithmic}[1]
\Require Parameters $K, N, T$, probabilities $p_{itj}$, and priority weights $\beta$.
\Ensure Resource allocation decision $C_{it}$ for each time slot.
\State \textbf{Virtual Instance Construction:} Define a virtual instance $\mathcal{I}'$ with arrival probabilities $p'_{itj} := \beta_i \cdot p_{itj}$.
\State Compute $\gamma'_{tj} := \mathbb{E}[\min(B'_t/j, 1)]$ for $\mathcal{I}'$ using Algorithm~\ref{alg:dynamic-beta}, where $R' = R_\beta$.
\State Initialize $B_1 := K$ and $C_{it} := 0$ for all $i,t$.
\For{$t = 1$ \textbf{to} $T$}
    \If{a request from group $i$ with demand $j$ arrives ($S_{itj}=1$)}
        \State \textbf{Priority-Based Screening:} Draw $Z_t \sim \text{Bernoulli}(\beta_i)$.
        \If{$Z_t = 1$}
\State Allocate $C_{it}=\min(B_t,j)$ with probability 
$q_{tj}=\frac{1}{(1+R_\beta)\gamma'_{tj}}$.
\State Otherwise, allocate $C_{it}=0$.
        \Else
            \State Allocate $C_{it} = 0$.
        \EndIf
    \EndIf
    \State Set $B_{t+1} := B_t - \sum_{i=1}^N C_{it}$.
\EndFor
\end{algorithmic}
\end{algorithm}

\begin{theorem}\label{thm:weighted-fora}
Algorithm~\ref{alg:weighted-fora} achieves an FE-FR-$\beta$ of $\frac{1}{1+R_\beta}$ for any FORA-IU instance with time-varying arrivals.
\end{theorem}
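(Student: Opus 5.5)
The plan is to reduce Theorem~\ref{thm:weighted-fora} to Theorem~\ref{thm:non-iid} applied to the virtual instance $\mathcal{I}'$, and then translate the per-arrival guarantee back to the original instance.

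\textbf{Step 1: coupling with the virtual instance.} Define a virtual arrival indicator $S'_{itj} := S_{itj}\cdot Z_t$, where $Z_t$ is the screening coin drawn when group $i$ arrives. Since the coin is independent of everything else, the events $\{S'_{itj}=1\}$ are independent across slots. Their probabilities are $P(S'_{itj}=1)=\beta_i p_{itj}=p'_{itj}$, and within a slot they remain mutually exclusive with total probability $\sum_{i,j}\beta_i p_{itj}\le 1$. Hence $(S'_{itj})$ is exactly an arrival process of a valid FORA-IU instance $\mathcal{I}'$ with unit priorities. Its load is $R'=\frac1K\sum_{t,i,j}\beta_i p_{itj} j=R_\beta$.

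Algorithm~\ref{alg:weighted-fora} allocates nothing to screened-out requests. On screened-in requests it applies rule~\eqref{eq:non-iid-allocate-prob} with $R'$ and $\gamma'_{tj}$. Consequently, the joint law of $(S',C,B)$ under Algorithm~\ref{alg:weighted-fora} coincides with the law produced by Algorithm~\ref{alg:non-iid} run on $\mathcal{I}'$. In particular, the residual budget $B_t$ has the same distribution as $B'_t$, so $\gamma'_{tj}=\mathbb{E}[\min(B_t/j,1)]$. This identification of distributions is the point that needs care. I would establish it by induction on $t$, noting that the budget transition from $B_t$ to $B_{t+1}$ depends only on $B_t$ and the virtual arrival in slot $t$.

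\textbf{Step 2: apply Theorem~\ref{thm:non-iid}.} By Theorem~\ref{thm:non-iid} applied to $\mathcal{I}'$, we have $\gamma'_{tj}\ge\frac1{1+R_\beta}$, so $q_{tj}\le 1$ and the algorithm is well-defined. The theorem also gives
\[
\mathbb{E}[C_{it}\mid S'_{itj}=1]=\frac{j}{1+R_\beta}
\]
whenever $p'_{itj}>0$. Because $\beta_i>0$, the condition $p'_{itj}>0$ is equivalent to $p_{itj}>0$. The same value can also be checked directly, using the independence of $B_t$ from the slot-$t$ arrival and coin: $q_{tj}\,\mathbb{E}[\min(B_t,j)]=q_{tj}\, j\gamma'_{tj}$.

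\textbf{Step 3: un-condition.} Conditioning on $S_{itj}=1$ and splitting on $Z_t$, which is $1$ with probability $\beta_i$ and otherwise forces $C_{it}=0$, gives
\[
\mathbb{E}[C_{it}\mid S_{itj}=1]=\frac{\beta_i j}{1+R_\beta}.
\]
Summing over $t$ and $j$ weighted by $p_{itj}$ via the law of total expectation, as in Lemma~\ref{lem:sufficiency}, yields
\[
\mathbb{E}\Big[\sum_t C_{it}\Big]=\frac{\beta_i}{1+R_\beta}\,\mathbb{E}\Big[\sum_{t,j}S_{itj}\,j\Big].
\]
This is exactly~\eqref{eq:UE-FR-beta} with $\alpha=\frac1{1+R_\beta}$, which proves the theorem.
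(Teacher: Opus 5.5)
Your proposal is correct and follows essentially the same route as the paper. Both reduce to Algorithm~\ref{alg:non-iid} run on the virtual instance $\mathcal{I}'$ with load $R_\beta$, identify the law of $B_t$ with that of $B'_t$, obtain $j/(1+R_\beta)$ per forwarded request, and then un-condition over the $\mathrm{Bernoulli}(\beta_i)$ screening coin. Your explicit coupling $S'_{itj}=S_{itj}Z_t$, the check that $\gamma'_{tj}\ge \frac{1}{1+R_\beta}$ via Theorem~\ref{thm:non-iid}, and the observation that $p'_{itj}>0 \iff p_{itj}>0$ add rigor to points the paper's proof only sketches.
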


\begin{proof}
The algorithm filters incoming requests: when a request from group $i$ arrives, it is passed to the allocation mechanism only with probability $\beta_i$.

This process creates a new stream where the probability of a request from group $i$ with demand $j$ is reduced from $p_{itj}$ to $p'_{itj} = \beta_i p_{itj}$. The total expected load the algorithm faces is therefore:
\[
R' = \frac{1}{K}\sum_{t, i, j} p'_{itj} \cdot j = \frac{1}{K}\sum_{t, i, j} \beta_i p_{itj} \cdot j = R_\beta.
\]
Conditional on the screening outcomes, the sequence of forwarded requests is distributionally identical to the arrival process of the virtual instance $\mathcal{I}'$, and the allocation rule applied to forwarded requests coincides with Algorithm~\ref{alg:non-iid} on $\mathcal{I}'$. Consequently, the actual residual budget $B_t$ used by Algorithm~\ref{alg:weighted-fora} has the same distribution as the virtual residual budget $B'_t$ used to define $\gamma'_{tj}$. Since the core allocation logic (Algorithm~\ref{alg:non-iid}) is applied to this virtual stream with parameter $R' = R_\beta$, it guarantees that every accepted request of size $j$ receives an expected allocation of $\frac{j}{1+R_\beta}$.

Now consider the perspective of the original group $i$. A request is accepted with probability $\beta_i$. Therefore, the unconditional expected allocation for any arrival of size $j$ is:
\[
\mathbb{E}[\text{Allocation}] = \mathbb{P}(\text{Accepted}) \times \mathbb{E}[\text{Allocation} \mid \text{Accepted}] = \beta_i \times \frac{j}{1+R_\beta}.
\]
Summing over all time slots, the total expected allocation for group $i$ is exactly $\frac{\beta_i}{1+R_\beta}$ times its total expected demand. This confirms that the policy achieves an FE-FR-$\beta$ of $\frac{1}{1+R_\beta}$.
\end{proof}

\subsection{Optimality of the Guarantee}

We now establish that the FE-FR-$\beta$ of $\frac{1}{1+R_\beta}$ achieved by Algorithm~\ref{alg:weighted-fora} is the best possible for any online policy in the general time-varying setting.

\begin{example}\label{exp:tight-non-iid-beta-active}
Fix an integer $N \ge 2$ and a priority vector $\beta = (\beta_1, \ldots, \beta_N) \in (0,1]^N$ with $\max_{i \in [N]} \beta_i = 1$. Without loss of generality, relabel so that $\beta_N = 1$. Fix a target load value $\rho > 0$ and a parameter $\varepsilon \in (0, \min\{1, \rho\})$.

Choose a horizon length $T \ge 2$ large enough so that
\begin{equation}\label{eq:general-prob-valid}
(N-1) \cdot \frac{\rho - \varepsilon}{(T-1)\sum_{m=1}^{N-1}\beta_m} \le 1,
\end{equation}
which ensures that the per-slot arrival probability is valid. Assume there are $K$ indivisible units of resources.

Define an instance in which every arrival (when it occurs) demands exactly $K$ units:
\[
p_{itK} = 
\begin{cases}
\displaystyle \frac{\rho - \varepsilon}{(T-1)\sum_{m=1}^{N-1}\beta_m} & \text{if } i \in \{1, \ldots, N-1\},\ t \in \{1, \ldots, T-1\}, \\[1.5ex]
\varepsilon & \text{if } i = N,\ t = T, \\
0 & \text{otherwise}.
\end{cases}
\]

The instance parameter $R_\beta$ equals $\rho$:
\[
R_\beta = \frac{1}{K}\left(\sum_{t=1}^{T-1}\sum_{i=1}^{N-1}\beta_i p_{itK} K + \beta_N p_{NTK} K\right) = (\rho - \varepsilon) + \varepsilon = \rho.
\]
\end{example}

This construction creates a dilemma. During the first $T-1$ slots, requests arrive only from lower-priority groups. If a policy serves these fairly, it uses up its entire budget, leaving insufficient resources for the high-priority group $N$ that might arrive in the final slot. Conversely, saving resources for that potential late arrival means treating the earlier groups unfairly.

\begin{theorem}\label{thm:optimal-rFE-FR-beta-active}
Consider the general FORA-IU problem with $N \ge 2$ groups and priority coefficients $\beta$. No online policy can guarantee an FE-FR-$\beta$ strictly larger than $\frac{1}{1+R_\beta}$ on all instances.
\end{theorem}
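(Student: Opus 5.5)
We will use the instance of Example~\ref{exp:tight-non-iid-beta-active} and show that, for any online policy, some group receives a filling ratio relative to $\beta_i$ of at most roughly $\frac{1}{1+\rho-\varepsilon}$ up to $O(\varepsilon)$ terms. Letting $\varepsilon \to 0$ then rules out any guarantee strictly above $\frac{1}{1+\rho} = \frac{1}{1+R_\beta}$.

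Fix an arbitrary (randomized) online policy achieving FE-FR-$\beta$ of $\alpha$ on this instance. Every arrival demands $K$ units. Let $U$ be the total number of units allocated during slots $1,\dots,T-1$.

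The first step is the bound $\E[C_{NT}] \le K - \E[U]$, which holds because $C_{NT} \le B_T = K - U$. Requiring the guarantee for group $N$, whose expected demand is $\varepsilon K$ and $\beta_N=1$, gives
\[
\E[C_{NT}] \ge \alpha\,\varepsilon K .
\]

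The key point is that the slot-$T$ allocation is made only after an arrival, and that arrival is independent of the history. Hence
\[
\E[C_{NT}] = \varepsilon\,\E[C_{NT}\mid S_{NTK}=1] \le \varepsilon\,\E[B_T] = \varepsilon\,(K-\E[U]).
\]
This yields $\alpha K \le K - \E[U]$, that is, $\E[U] \le (1-\alpha)K$.

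Next we impose the guarantee on groups $1,\dots,N-1$ and sum with weights. For $i<N$, the expected demand is $K(T-1)p_{i\cdot K}$, so $\E[A_i] \ge \alpha\beta_i K (T-1)p_{i\cdot K}$. Summing over $i<N$ and using $\sum_{i<N}\beta_i (T-1)p_{i\cdot K} = \rho-\varepsilon$ gives
\[
\E[U] = \sum_{i<N}\E[A_i] \ge \alpha K(\rho-\varepsilon).
\]
Combining the two bounds gives $\alpha(\rho-\varepsilon) \le 1-\alpha$, i.e.
\[
\alpha \le \frac{1}{1+\rho-\varepsilon}.
\]

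This is slightly weaker than needed, because the $\varepsilon$ lost from group $N$'s share inflates the bound. The fix is to sharpen the $U$ bound: allocations to groups $i<N$ already count their full contribution $\beta_i$ times demand against the budget, while group $N$ contributes $\varepsilon K$ to $R_\beta$. We expect this bookkeeping to be the main obstacle.

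Concretely, suppose a policy achieves $\alpha > \frac{1}{1+\rho}$ on all instances. Choose $\varepsilon$ small enough that $\frac{1}{1+\rho-\varepsilon} < \alpha$, which is possible since $\frac{1}{1+\rho-\varepsilon} \to \frac{1}{1+\rho}$ as $\varepsilon\to 0$. Applying the argument above to this instance, where $R_\beta=\rho$ exactly, gives a contradiction.

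The remaining things to check are mostly routine. The per-slot probabilities are valid by \eqref{eq:general-prob-valid}. The conditioning and independence step for slot $T$ must hold for randomized policies; this is fine because the policy's internal randomness up to $T-1$ is independent of $S_{NTK}$. Finally, we note that $N\ge 2$ is needed to have both the early low-priority stream and the late high-priority arrival.
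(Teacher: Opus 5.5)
Your proof is correct and follows the paper's argument essentially step by step: the same instance from Example~\ref{exp:tight-non-iid-beta-active}, the bound $\E[U]\ge \alpha K(\rho-\varepsilon)$ from the low-priority groups, independence of $S_{NTK}$ from $B_T$ giving $\alpha\varepsilon K\le \varepsilon\E[B_T]=\varepsilon(K-\E[U])$, hence $\alpha\le \frac{1}{1+\rho-\varepsilon}$, and then $\varepsilon\downarrow 0$ with $R_\beta=\rho$ held fixed. The paragraph proposing to ``sharpen'' the bound on $U$ is unnecessary, and it could not succeed on this fixed instance, where $\frac{1}{1+\rho-\varepsilon}$ is actually attainable (e.g.\ by running Algorithm~\ref{alg:weighted-fora} on slots $1,\dots,T-1$ with load $\rho-\varepsilon$ and serving greedily at slot $T$); your subsequent limiting argument already closes the proof, exactly as the paper does.
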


\begin{proof}
Fix $\rho > 0$ and consider the instances from Example~\ref{exp:tight-non-iid-beta-active} for an arbitrary (small) $\varepsilon \in (0, \min\{1, \rho\})$. Let $\pi$ be any online policy and let $\alpha$ be the FE-FR-$\beta$ it achieves.

Let
\[
X_{\mathrm{pre}} := \sum_{t=1}^{T-1}\sum_{i=1}^{N-1} C_{it} \qquad \text{and} \qquad B_T := K - X_{\mathrm{pre}}
\]
denote, respectively, the total allocation made before time $T$ and the remaining budget at the start of slot $T$.

For each group $i \in \{1, \ldots, N-1\}$, all demand occurs in slots $1, \ldots, T-1$, and every arrival demands $K$ units. Hence,
\[
\mathbb{E}\left[\sum_{t=1}^{T-1} C_{it}\right] \ge \alpha \beta_i \mathbb{E}\left[\sum_{t=1}^{T-1} S_{itK} \cdot K\right] = \alpha \beta_i \sum_{t=1}^{T-1} p_{itK} K.
\]
Summing over $i = 1, \ldots, N-1$ gives
\[
\mathbb{E}[X_{\mathrm{pre}}] \ge \alpha K \sum_{t=1}^{T-1}\sum_{i=1}^{N-1} \beta_i p_{itK} = \alpha K(\rho - \varepsilon),
\]
where the last equality follows from the construction of $p_{itK}$.

Therefore,
\begin{equation}\label{eq:BT-upper}
\mathbb{E}[B_T] = K - \mathbb{E}[X_{\mathrm{pre}}] \le K(1 - \alpha(\rho - \varepsilon)).
\end{equation}

At time $T$, only group $N$ can arrive, with probability $\varepsilon$, and its demand is $K$. Thus $C_{NT} \le B_T S_{NTK}$, which implies
\[
\mathbb{E}[C_{NT}] \le \mathbb{E}[B_T S_{NTK}] = \varepsilon \mathbb{E}[B_T],
\]
using independence across time slots.

On the other hand, the FE-FR-$\beta$ constraint for group $N$ requires
\[
\mathbb{E}[C_{NT}] \ge \alpha \beta_N \mathbb{E}[S_{NTK} \cdot K] = \alpha \varepsilon K,
\]
since $\beta_N = 1$. Combining the bounds yields
\[
\alpha \varepsilon K \le \varepsilon \mathbb{E}[B_T] \le \varepsilon K(1 - \alpha(\rho - \varepsilon)).
\]
Dividing by $\varepsilon K > 0$ gives
\[
\alpha \le 1 - \alpha(\rho - \varepsilon) \quad \Longrightarrow \quad \alpha(1 + \rho - \varepsilon) \le 1 \quad \Longrightarrow \quad \alpha \le \frac{1}{1 + \rho - \varepsilon}.
\]

Finally, recall that $R_\beta = \rho$ in Example~\ref{exp:tight-non-iid-beta-active}. Let $\delta > 0$ be arbitrary. Since $\lim_{\varepsilon \downarrow 0} \frac{1}{1+\rho-\varepsilon} = \frac{1}{1+\rho}$, choose $\varepsilon \in (0, \min\{1, \rho\})$ small enough so that
\[
\frac{1}{1+\rho-\varepsilon} < \frac{1}{1+\rho} + \delta.
\]
The bound then implies $\alpha \le \frac{1}{1+\rho} + \delta$ on an instance with $R_\beta = \rho$. As $\delta$ is arbitrary, no policy can guarantee an FE-FR-$\beta$ strictly larger than $\frac{1}{1+\rho} = \frac{1}{1+R_\beta}$ uniformly over all instances.
\end{proof}

\section{Optimal Online Policy for Time-Invariant Request Distributions}\label{sec:time-invariant}

In this section, we restrict attention to FORA-IU instances in which the arrival distribution does not vary over time. Formally, we assume that for each group $i \in [N]$ and demand size $j \in [K]$, there exist probabilities $p_{ij} \ge 0$ such that
\[
p_{itj} = p_{ij} \qquad \text{for all } t \in [T],\ i \in [N],\ j \in [K],
\]
with $\sum_{i=1}^N \sum_{j=1}^K p_{ij} \le 1$ to ensure the probability of no arrival in any slot is nonnegative. Our goal is to design an online policy that maximizes the worst-case FE-FR-$\beta$ in this restricted class, which is expected to be strictly better than $\frac{1}{1+R_\beta}$.

We introduce notation that will be used throughout this section. For each group $i$, the per-slot expected demand is
\(
d_i := \sum_{j=1}^K p_{ij} \cdot j,
\)
and the priority-weighted load parameter $R_\beta$ defined in~\eqref{eq:R-beta} simplifies to
\begin{equation}\label{eq:R-beta-time-invariant}
R_\beta = \frac{1}{K}\sum_{t=1}^T \sum_{i=1}^N \sum_{j=1}^K \beta_i p_{ij} j = \frac{T}{K}\sum_{i=1}^N \beta_i d_i.
\end{equation}
We also define the \emph{normalized per-slot priority-weighted demand}
\begin{equation}\label{eq:W-beta-def}
W_\beta := \frac{R_\beta}{T} = \frac{1}{K}\sum_{i=1}^N \beta_i d_i = \frac{1}{K}\sum_{i=1}^N \sum_{j=1}^K \beta_i p_{ij} j.
\end{equation}
Since at most one request arrives per slot, each demand satisfies $j \le K$, and $\beta_i \le 1$ for all $i$, we have
\[
\sum_{i=1}^N \sum_{j=1}^K \beta_i p_{ij} \cdot j \le \sum_{i=1}^N \sum_{j=1}^K p_{ij} \cdot j = \mathbb{E}[\text{demand in a slot}] \le K,
\]
which implies $W_\beta \in [0,1]$ and $R_\beta = T W_\beta \in [0,T]$.

\subsection{The Homogeneous-Priority Case}

We begin by analyzing the homogeneous-priority setting where $\beta_i = 1$ for all groups $i \in [N]$. In this case, the priority-weighted load $R_\beta$ reduces to the standard load parameter
\begin{equation}\label{eq:R-time-invariant}
R = \frac{T}{K}\sum_{i=1}^N d_i,
\end{equation}
and the normalized per-slot demand becomes
\begin{equation}\label{eq:W-def}
W := \frac{R}{T} = \frac{1}{K}\sum_{i=1}^N d_i = \frac{1}{K}\sum_{i=1}^N\sum_{j=1}^K p_{ij} j.
\end{equation}

We introduce a simple online policy called \emph{Random Cyclic Blocks} (RCB) and analyze its FE-FR guarantee. The algorithm views the $K$ resource units as positions arranged on a circle. When a request with demand $j$ arrives, the algorithm randomly selects a starting position on this circle and attempts to claim a contiguous block of $j$ positions. Units in the block that are still available are allocated to the request; units already consumed are skipped (see Algorithm~\ref{alg:rcb}).

\begin{algorithm}[h]
\caption{Random Cyclic Blocks (RCB)}\label{alg:rcb}
\begin{algorithmic}[1]
\Require Total units $K$, horizon $T$, and time-invariant arrival probabilities $p_{ij}$.
\Ensure Allocations $C_{it}$ for all $i \in [N]$, $t \in [T]$.
\State \textbf{Initialize:} For each unit $u \in \{1,\dots,K\}$, set \texttt{free}$(u) \leftarrow \texttt{true}$.
\For{each slot $t = 1,2,\dots,T$}
    \State Observe the arrival for time slot $t$.
    \If{a request arrives from group $i$ with demand size $j$}
        \State Draw a start index $S_t$ uniformly at random from $\{1, \dots, K\}$.
        \State Define the cyclic target block of size $j$:
        \[ H_t := \{S_t, S_t+1, \dots, S_t+j-1\} \pmod K \]
        \State Identify available units in the block:
        \[ Q_t := \{u \in H_t : \texttt{free}(u)=\texttt{true}\} \]
        \State Allocate $C_{it} \leftarrow \lvert Q_t \rvert$ units to this request.
        \State Update \texttt{free}$(u) \leftarrow \texttt{false}$ for all $u \in Q_t$.
    \Else
        \State $C_{it} \leftarrow 0$ for all $i$.
    \EndIf
\EndFor
\end{algorithmic}
\end{algorithm}

The algorithm is straightforward to verify as a valid policy. Since $H_t$ contains exactly $j$ positions, the allocation $\lvert Q_t \rvert$ is an integer between $0$ and $j$. Furthermore, each unit $u$ transitions from available to consumed at most once, so the total allocation never exceeds $K$.

To analyze the expected filling ratio, we exploit the rotational symmetry of the circular structure. Rather than tracking the complex interactions among overlapping blocks, we focus on the probability that a single fixed unit $u$ is consumed by a particular group.

\begin{lemma}\label{lem:unit-prob}
Fix an arbitrary resource unit $u \in \{1, \dots, K\}$. In any given time slot, the probability that unit $u$ falls within the target block of a request from group $i$ is
\[
q_i := \frac{1}{K} \sum_{j=1}^K p_{ij} \cdot j.
\]
Consequently, the probability that unit $u$ is requested by any group in a given time slot equals $W := \sum_{i=1}^N q_i = \frac{R}{T}$.
\end{lemma}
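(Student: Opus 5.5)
The plan is to condition on what arrives in the slot and then use that the start index $S_t$ is uniform and independent of everything else. Fix a slot $t$ and a unit $u$. The events $\{S_{itj}=1\}$ over pairs $(i,j)$ are disjoint, since at most one request arrives per slot, and $P(S_{itj}=1)=p_{ij}$. So by the law of total probability,
\[
P(u \in H_t \text{ and the request comes from group } i) = \sum_{j=1}^K p_{ij}\, P(u \in H_t \mid S_{itj}=1).
\]
This reduces the lemma to computing the conditional probability for a single demand size $j$.

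Given a request of size $j$, the block is $H_t=\{S_t,\dots,S_t+j-1\} \pmod K$. Unit $u$ lies in $H_t$ exactly when $S_t \in \{u-j+1,\dots,u\} \pmod K$. Because $j\le K$, these $j$ residues are distinct. Since $S_t$ is uniform on $\{1,\dots,K\}$ and drawn independently of the arrival, the conditional probability is exactly $j/K$. Substituting gives $q_i=\frac1K\sum_j p_{ij} j$.

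Summing over groups, again using disjointness of arrivals from different groups within a slot, gives $\sum_i q_i = \frac1K\sum_i d_i$. By \eqref{eq:W-def} this equals $W=R/T$.

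The only point that needs care is the cyclic counting step. One must check that the $j$ candidate start positions covering $u$ are pairwise distinct modulo $K$; this is where $j\le K$ is used. One must also check that $S_t$ is independent of which request arrived. Note that the statement concerns the target block $H_t$, not whether $u$ is still free, so no dependence on earlier slots enters. Time invariance of $p_{ij}$ is what makes $q_i$ independent of $t$.
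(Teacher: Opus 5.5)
Your proof is correct and follows the same route as the paper. You condition on a size-$j$ request from group $i$ arriving, count the $j$ start positions whose cyclic block covers $u$ to get probability $j/K$, and then sum over $j$ and over the mutually exclusive group arrivals to obtain $q_i$ and $W=R/T$; your explicit checks that these start positions are distinct modulo $K$ (using $j\le K$) and that $S_t$ is independent of the arrival just make precise what the paper states in one line.
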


\begin{proof}
Suppose a request from group $i$ arrives with demand $j$. The algorithm selects a start index $S_t$ uniformly at random from $\{1, \dots, K\}$. Since the request occupies a contiguous block of $j$ positions (with wraparound), exactly $j$ starting positions result in the block covering the fixed unit $u$. Thus, conditioned on a request of size $j$ arriving, unit $u$ is requested with probability $j/K$.

The unconditional probability that group $i$ requests unit $u$ is obtained by summing over all possible demand sizes:
\[
q_i = \sum_{j=1}^K \mathbb{P}(\text{Group } i \text{ arrives with demand } j) \cdot \frac{j}{K} = \sum_{j=1}^K p_{ij} \cdot \frac{j}{K} = \frac{1}{K} \sum_{j=1}^K p_{ij} \cdot j.
\]
Since at most one group arrives per time slot, the arrival events are mutually exclusive. Summing over all groups yields the total probability that unit $u$ is requested:
\[
W = \sum_{i=1}^N q_i = \frac{1}{K} \sum_{i=1}^N \sum_{j=1}^K p_{ij} \cdot j.
\]
Comparing with~\eqref{eq:R-time-invariant}, we obtain $W = R/T$.
\end{proof}

Under RCB, a unit is allocated to the first request whose block covers it. This observation enables us to compute the expected total allocation via a geometric series.

\begin{lemma}\label{lem:unit-alloc}
Fix a specific unit $u \in \{1, \dots, K\}$. Under RCB, the probability that unit $u$ is allocated to group $i$ over the horizon (equivalently, the expectation of the corresponding indicator) is
\[
\mathbb{P}(\text{unit } u \text{ is allocated to group } i)
= q_i \cdot \frac{1 - (1 - W)^T}{W}.
\]
\end{lemma}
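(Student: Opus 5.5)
The plan is to decompose the event ``unit $u$ is allocated to group $i$'' according to the first time slot at which $u$ is covered by a target block. Under RCB, $u$ is consumed exactly at the first slot $t$ whose block $H_t$ contains it. At that moment $u$ is still free, so it lies in $Q_t$ and is allocated to whichever group arrived at $t$. At every later covering slot, $u$ is already consumed and is skipped. The key structural observation is therefore: unit $u$ is allocated to group $i$ if and only if there is a slot $t$ with three properties. First, $u\notin H_\tau$ for all $\tau<t$. Second, a request from group $i$ arrives at $t$. Third, $u\in H_t$. These events are disjoint across $t$.

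Next I will define, for each slot $t$, the event $E_t^{(i)}$ that group $i$ arrives at $t$ and $u\in H_t$. I will also define the event $E_t:=\bigcup_{k} E_t^{(k)}$ that $u$ is covered at slot $t$. The crucial point is that whether $u\in H_t$ depends only on the arrival at slot $t$ (group and demand size) and the independent uniform start index $S_t$. It does \emph{not} depend on the free/consumed status of any unit, because the block $H_t$ is chosen obliviously to the state. Hence the pairs (arrival at $t$, $S_t$) are i.i.d.\ across slots, using independence of arrivals across slots and fresh randomness for $S_t$. So the events $E_t^{(i)}$ for different $t$ are independent. By Lemma~\ref{lem:unit-prob}, $\mathbb{P}(E_t^{(i)})=q_i$ and $\mathbb{P}(E_t)=W$ for every $t$, using time-invariance.

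Then
\[
\mathbb{P}(\text{unit } u \text{ allocated to } i)=\sum_{t=1}^T \mathbb{P}\Bigl(\bigcap_{\tau<t} E_\tau^c\Bigr)\mathbb{P}(E_t^{(i)})=\sum_{t=1}^T (1-W)^{t-1} q_i,
\]
and summing the geometric series gives $q_i\cdot\frac{1-(1-W)^T}{W}$. The degenerate case $W=0$ forces $q_i=0$, and the formula is read as its limit $q_iT$ (or trivially $0$).

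The main obstacle is rigorously justifying the first-cover characterization and the independence claim. Specifically, one must argue that coverage of $u$ is determined by state-independent randomness, so that conditioning on ``not covered before $t$'' does not alter the distribution of the slot-$t$ coverage. I will state this explicitly: $H_t$ is a function of $(j,S_t)$ only, and $u$'s status enters only in deciding which group receives it, not in whether it is covered.
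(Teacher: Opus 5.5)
Your proposal is correct and follows the paper's proof essentially step for step: decompose by the first slot at which $u$ is covered, use independence across slots to get $\mathbb{P}(A_t)=(1-W)^{t-1}q_i$, and sum these disjoint events as a geometric series. Your explicit remark that $H_t$ depends only on the slot-$t$ arrival and $S_t$ (not on the free/consumed state), and your handling of $W=0$, make rigorous two points the paper leaves implicit.
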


\begin{proof}
For each $t \in [T]$, let $A_t$ denote the event that unit $u$ is allocated to group $i$ at time $t$.
Event $A_t$ occurs if and only if: (1) group $i$ requests unit $u$ at time $t$, and (2) no group requested unit $u$ in any prior slot $1, \dots, t-1$.
By Lemma~\ref{lem:unit-prob}, the probability of (2) is $(1-W)^{t-1}$ and the probability of (1) is $q_i$.
By independence across time slots, $\mathbb{P}(A_t) = (1-W)^{t-1} \cdot q_i$.

Since unit $u$ can be allocated at most once, the events $\{A_t\}_{t=1}^T$ are disjoint. Therefore,
\begin{align*}
\mathbb{P}(\text{unit } u \text{ is allocated to group } i)
&= \sum_{t=1}^T \mathbb{P}(A_t)
= \sum_{t=1}^T (1 - W)^{t-1} \cdot q_i \\
&= q_i \sum_{k=0}^{T-1} (1-W)^k
= q_i \cdot \frac{1 - (1 - W)^T}{W}.
\end{align*}
\end{proof}

Aggregating this single-unit analysis yields the FE-FR guarantee for the algorithm.

\begin{theorem}\label{thm:rcb-fe-fr}
Consider an unweighted time-invariant FORA-IU instance with load parameter $R$. Algorithm~\ref{alg:rcb} achieves
\[
\mathrm{FE\text{-}FR}(\mathrm{RCB}) = \frac{1 - (1 - R/T)^T}{R},
\]
with the convention that this expression equals $1$ when $R = 0$. Furthermore, this guarantee is lower-bounded by $\frac{1 - e^{-R}}{R}$ (also interpreted as $1$ at $R=0$). 
\end{theorem}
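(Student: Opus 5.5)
The plan is to aggregate the single-unit computation of Lemma~\ref{lem:unit-alloc} over all $K$ units by linearity of expectation, and then compare the result with group $i$'s expected demand.

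\textbf{Step 1 (aggregation).} Since every unit is allocated to at most one request, the total allocation to group $i$ equals the number of units allocated to group $i$:
\[
\sum_{t=1}^T C_{it}=\sum_{u=1}^K \mathbf{1}\{u \text{ allocated to group } i\}.
\]
Taking expectations and applying Lemma~\ref{lem:unit-alloc} to each $u$ gives
\[
\E\Bigl[\sum_t C_{it}\Bigr]=K q_i\cdot \frac{1-(1-W)^T}{W}.
\]
Here the rotational symmetry has already been absorbed into the lemma, since the formula is the same for every $u$.

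\textbf{Step 2 (normalization).} The expected demand of group $i$ is $\E[\sum_{t,j} S_{itj} j]=T d_i = T K q_i$. Hence, for every group with positive demand, the filling ratio is
\[
\frac{\E[\sum_t C_{it}]}{T d_i}=\frac{1-(1-W)^T}{TW}=\frac{1-(1-R/T)^T}{R},
\]
using $W=R/T$. Because this ratio is identical across groups, the minimum over groups, namely the FE-FR, equals exactly this expression. When $R=0$ every group has zero expected demand, so constraint \eqref{eq:UE-FR-beta} holds trivially for any $\alpha\le 1$. This justifies the convention that the value is $1$, which is also the limit of the expression as $R\to 0$. If $W=0$, the geometric sum in Lemma~\ref{lem:unit-alloc} should be read as $T$, and this case is consistent with the same convention.

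\textbf{Step 3 (exponential bound).} Since $0\le R/T=W\le 1$, the inequality $1-x\le e^{-x}$ gives $(1-R/T)^T\le e^{-R}$. Therefore
\[
1-(1-R/T)^T\ge 1-e^{-R},
\]
and dividing by $R>0$ yields the stated lower bound.

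\textbf{Main subtlety.} The main point needing care is Step 1, namely that Lemma~\ref{lem:unit-alloc} applies verbatim. One must confirm that the events ``$u$ is covered at slot $t$'' are independent across slots with probability $W$ each, regardless of earlier allocations. This holds because RCB draws the block position independently of the free/consumed state of the units, so coverage of $u$ depends only on the arrival at slot $t$ and that slot's fresh start index. Everything else is routine algebra.
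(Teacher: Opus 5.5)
Your proposal is correct and follows the paper's proof essentially step for step: you sum Lemma~\ref{lem:unit-alloc} over all $K$ units, divide by the expected demand $T d_i = T K q_i$ using $W = R/T$, and then invoke $(1-R/T)^T \le e^{-R}$. Your explicit treatment of the $R=0$ convention, and your check that per-slot coverage of a unit is independent of the free/consumed state, only make precise points the paper leaves implicit.
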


\begin{proof}
The total expected allocation to group $i$ equals the sum of expected allocations across all $K$ units. By symmetry, Lemma~\ref{lem:unit-alloc} applies identically to each unit:
\[
\mathbb{E}\left[\sum_{t=1}^T C_{it}\right] = K \cdot \mathbb{E}[\text{allocation of } u \text{ to group } i] = K \cdot q_i \cdot \frac{1 - (1 - W)^T}{W}.
\]
Substituting $q_i = \frac{1}{K}\sum_{j} p_{ij} j$ and $W = R/T$:
\[
\mathbb{E}\left[\sum_{t=1}^T C_{it}\right] = \left(\sum_{j=1}^K p_{ij} j\right) \cdot \frac{1 - (1 - R/T)^T}{R/T}.
\]
The expected total demand for group $i$ is $\mathbb{E}[\mathrm{Demand}_i] = T \sum_{j=1}^K p_{ij} j$. For any group with positive expected demand, dividing the expected allocation by the expected demand:
\[
\mathrm{FE\text{-}FR} = \frac{\left(\sum_{j} p_{ij} j\right) \cdot \frac{1 - (1 - R/T)^T}{R/T}}{T \left(\sum_{j} p_{ij} j\right)} = \frac{1 - (1 - R/T)^T}{R}.
\]
The inequality $(1 - x/T)^T \le e^{-x}$ for $x > 0$ and $T \ge 1$ implies $1 - (1 - R/T)^T \ge 1 - e^{-R}$, yielding the lower bound $\frac{1 - e^{-R}}{R}$.
\end{proof}

Since $\frac{1-e^{-R}}{R} > \frac{1}{1+R}$ for all $R > 0$, Theorem~\ref{thm:rcb-fe-fr} demonstrates that RCB strictly improves upon the guarantee $\frac{1}{1+R}$ achieved by Algorithm~\ref{alg:non-iid}. This improvement quantifies the value of distributional stationarity, demonstrating that stable arrival patterns allow the algorithm to exploit the symmetry of the circular allocation structure for better fairness. This advantage further holds for heterogeneous-priority scenarios.

\subsection{Extension to Heterogeneous Priorities}

We now extend the RCB algorithm to handle heterogeneous priority weights $\beta = (\beta_1, \ldots, \beta_N) \in (0,1]^N$. The extension follows the same reduction strategy developed in Section~\ref{sec:time-varying-het} where we incorporate priority weights by screening incoming requests based on their priority levels before passing them to the core allocation mechanism.

Specifically, when a request from group $i$ arrives, we first perform a randomized screening step that accepts the request with probability $\beta_i$. Rejected requests receive zero allocation immediately. Accepted requests are then processed by the RCB algorithm, which now operates on a reduced effective load. This filtering reduces the per-slot probability that any group $i$ contributes to the system from $p_{ij}$ to $\beta_i p_{ij}$, yielding an effective priority-weighted load of $R_\beta$ rather than $R$.

\begin{algorithm}[h]
\caption{Weighted Random Cyclic Blocks}\label{alg:weighted-rcb}
\begin{algorithmic}[1]
\Require Total units $K$, horizon $T$, time-invariant probabilities $p_{ij}$, and priority weights $\beta$.
\Ensure Allocations $C_{it}$ for all $i \in [N]$, $t \in [T]$.
\State \textbf{Initialize:} For each unit $u \in \{1,\dots,K\}$, set \texttt{free}$(u) \leftarrow \texttt{true}$.
\For{$t = 1$ \textbf{to} $T$}
    \State Observe the arrival for time slot $t$.
    \If{a request arrives from group $i$ with demand $j$}
        \State \textbf{Priority-Based Screening:} Draw $Z_t \sim \mathrm{Bernoulli}(\beta_i)$.
        \If{$Z_t = 0$}
            \State Reject the request: $C_{it} \gets 0$.
        \Else
            \State \textbf{RCB Allocation:}
            \State Draw start index $S_t \sim \mathrm{Uniform}(\{1, \dots, K\})$.
            \State Define target block: $H_t := \{S_t, S_t+1, \dots, S_t+j-1\} \pmod K$.
            \State Identify available units: $Q_t := \{u \in H_t : \texttt{free}(u)=\texttt{true}\}$.
            \State Allocate $C_{it} \leftarrow \lvert Q_t \rvert$.
            \State Update \texttt{free}$(u) \leftarrow \texttt{false}$ for all $u \in Q_t$.
        \EndIf
    \Else
        \State $C_{it} \leftarrow 0$ for all $i$.
    \EndIf
\EndFor
\end{algorithmic}
\end{algorithm}

\begin{theorem}\label{thm:weighted-RCB}
Algorithm~\ref{alg:weighted-rcb} achieves an FE-FR-$\beta$ of
\[
\frac{1-(1-R_\beta/T)^T}{R_\beta},
\]
with the convention that the expression equals $1$ when $R_\beta=0$. In particular, it is at least $\frac{1-e^{-R_\beta}}{R_\beta}$, again interpreted as $1$ at $R_\beta=0$.
\end{theorem}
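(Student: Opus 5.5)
The plan is to redo the single-unit analysis of Lemmas~\ref{lem:unit-prob} and~\ref{lem:unit-alloc} with the screening step folded into the per-slot probabilities, and then aggregate over the $K$ units exactly as in Theorem~\ref{thm:rcb-fe-fr}. Fix a unit $u$ and a slot $t$. The screening coin $Z_t$ and the start index $S_t$ are drawn independently of each other, of the arrival, and of the past. Hence the event ``a request from group $i$ with demand $j$ arrives, passes screening, and its block $H_t$ covers $u$'' has probability $p_{ij}\beta_i \cdot j/K$. Since at most one request arrives per slot, these events are disjoint across $(i,j)$. Summing over $j$, the probability that $u$ is targeted by a screened-in request from group $i$ is
\[
q_i^\beta := \frac{\beta_i}{K}\sum_{j=1}^K p_{ij}\, j = \frac{\beta_i d_i}{K}.
\]
Summing over $i$, the probability that $u$ is targeted at all in a slot is $\sum_i q_i^\beta = W_\beta = R_\beta/T$, by~\eqref{eq:W-beta-def}. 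Both quantities are the same in every slot and independent across slots.

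Next I will repeat the first-hit argument of Lemma~\ref{lem:unit-alloc}. Unit $u$ goes to group $i$ at time $t$ exactly when it is targeted by a screened-in group-$i$ request at $t$ and was not targeted in any earlier slot. A unit that was never targeted is still free, and a targeted free unit is always allocated. So this event has probability $(1-W_\beta)^{t-1}q_i^\beta$. These events are disjoint over $t$, and summing gives
\[
q_i^\beta\cdot\frac{1-(1-W_\beta)^T}{W_\beta}
\]
whenever $W_\beta>0$, and $Tq_i^\beta$ in the degenerate case. Linearity over the $K$ units (by symmetry, or simply because the computation does not depend on $u$) gives
\[
\mathbb{E}\Bigl[\sum_t C_{it}\Bigr] = \beta_i d_i\,\frac{1-(1-R_\beta/T)^T}{R_\beta/T}.
\]
Dividing by $\beta_i$ times the expected demand $T d_i$ yields exactly $\frac{1-(1-R_\beta/T)^T}{R_\beta}$ for every group with $d_i>0$. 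Groups with $d_i=0$ satisfy~\eqref{eq:UE-FR-beta} trivially.

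The main subtlety I expect is the edge case $R_\beta=0$. Since all $\beta_i>0$, this forces all $d_i=0$, so every ratio is vacuous and the value $1$ holds by convention. A second minor check is the validity of $(1-W_\beta)^{t-1}$; this follows from $W_\beta\in[0,1]$, which was shown earlier in the section. For the final bound I will invoke $(1-x/T)^T\le e^{-x}$ for $0\le x\le T$, exactly as in Theorem~\ref{thm:rcb-fe-fr}, to get the lower bound $\frac{1-e^{-R_\beta}}{R_\beta}$.
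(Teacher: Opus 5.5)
Your proposal is correct and follows essentially the same route as the paper. The paper passes to the virtual time-invariant instance $p'_{ij}=\beta_i p_{ij}$, applies Lemma~\ref{lem:unit-alloc} to a fixed unit, sums over the $K$ units, and divides by $\beta_i T d_i$. That is exactly your computation, with your version folding the screening coin directly into the per-slot targeting probability and thereby also making explicit the distributional equivalence that the paper takes for granted.
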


\begin{proof}
If $R_\beta=0$, then $d_i=0$ for every group $i$, so the FE-FR-$\beta$ claim is trivial. Assume henceforth that $R_\beta>0$.

After the Bernoulli screening step, the algorithm operates on a virtual time-invariant instance with arrival probabilities
\(
p'_{ij} := \beta_i p_{ij}.
\)
For this virtual instance, define
\[
d_i' := \sum_{j=1}^K p'_{ij}j = \beta_i d_i,
\qquad
q_i' := \frac{d_i'}{K} = \frac{\beta_i d_i}{K},
\qquad
W' := \sum_{i=1}^N q_i' = \frac{1}{K}\sum_{i=1}^N \beta_i d_i = \frac{R_\beta}{T}.
\]

Applying Lemma~\ref{lem:unit-alloc} to this virtual instance shows that for any fixed unit $u$,
\[
\mathbb{P}(\text{unit }u\text{ is allocated to group }i)
=
q_i' \cdot \frac{1-(1-W')^T}{W'}.
\]
Summing over all $K$ units yields
\[
\mathbb{E}\left[\sum_{t=1}^T C_{it}\right]
=
K q_i' \cdot \frac{1-(1-W')^T}{W'}
=
\beta_i d_i \cdot \frac{1-(1-W')^T}{W'}.
\]

Since the expected total demand of group $i$ in the original instance is
\[
\mathbb{E}\left[\sum_{t=1}^T\sum_{j=1}^K S_{itj}j\right] = T d_i,
\]
we obtain
\[
\mathbb{E}\left[\sum_{t=1}^T C_{it}\right]
=
\beta_i \,
\mathbb{E}\left[\sum_{t=1}^T\sum_{j=1}^K S_{itj}j\right]
\cdot
\frac{1-(1-W')^T}{T W'}.
\]
Using $W' = R_\beta/T$, this becomes
\[
\mathbb{E}\left[\sum_{t=1}^T C_{it}\right]
=
\beta_i \,
\mathbb{E}\left[\sum_{t=1}^T\sum_{j=1}^K S_{itj}j\right]
\cdot
\frac{1-(1-R_\beta/T)^T}{R_\beta}.
\]
Hence the algorithm achieves FE-FR-$\beta$ of $\frac{1-(1-R_\beta/T)^T}{R_\beta}$.

Finally, since $(1-x/T)^T \le e^{-x}$ for $x \in [0,T]$, we have
\[
\frac{1-(1-R_\beta/T)^T}{R_\beta} \ge \frac{1-e^{-R_\beta}}{R_\beta}.
\]
This completes the proof.
\end{proof}

In particular, Algorithm~\ref{alg:weighted-rcb} guarantees FE-FR-$\beta$ of at least $\frac{1-e^{-R_\beta}}{R_\beta}$. Since $\frac{1-e^{-R_\beta}}{R_\beta} > \frac{1}{1+R_\beta}$ for every $R_\beta>0$, time-invariant arrivals admit a strictly stronger load-dependent guarantee than the optimal universal bound for arbitrary time-varying arrivals.

While the priority-based screening approach provides a conceptually simple reduction to the unweighted setting, it is not merely a convenient heuristic. One might ask whether a more sophisticated, state-dependent prioritization scheme could achieve better fairness guarantees. In the following subsection, we answer this question in the negative by proving that the bound achieved by Algorithm~\ref{alg:weighted-rcb} is tight. In particular, no online policy can guarantee an FE-FR-$\beta$ strictly larger than the exact finite-horizon expression $\frac{1 - (1 - R_\beta/T)^T}{R_\beta}$ for all time-invariant instances.

\subsection{Optimality of the Guarantee}

We now establish that the FE-FR-$\beta$ of $\frac{1-(1-R_\beta/T)^T}{R_\beta}$ achieved by Algorithm~\ref{alg:weighted-rcb} is the best possible for any online policy in the time-invariant setting.

\begin{example}[Full-support $\varepsilon$-hard family]\label{exp:full-support-eps-hard}
Fix $\beta\in(0,1]^N$ with $\max_{i\in[N]}\beta_i=1$, and fix $\rho\in(0,T)$.
Choose $k\in[N]$ such that $\beta_k=1$, and define
\[
\mathcal{A}:=[N]\times[K]\setminus\{(k,K)\}.
\]
Let
\[
A:=\sum_{(i,j)\in\mathcal{A}}\frac{\beta_i j}{K},
\qquad
\Gamma:=\sum_{(i,j)\in\mathcal{A}}\left(1-\frac{\beta_i j}{K}\right).
\]
For a parameter $\lambda>0$, define
\[
p_{ij}^{(\lambda)}=
\begin{cases}
\dfrac{\rho-\lambda A}{T}, & \text{if } (i,j)=(k,K),\\[1.5ex]
\dfrac{\lambda}{T}, & \text{if } (i,j)\in\mathcal{A}.
\end{cases}
\]
and set $p_{itj}^{(\lambda)} := p_{ij}^{(\lambda)}$ for all $t\in[T]$.
If $\lambda>0$ is small enough so that
\[
\rho-\lambda A>0
\qquad\text{and}\qquad
\rho+\lambda\Gamma\le T,
\]
then this is a feasible full-support time-invariant instance, i.e.,
\[
p_{ij}^{(\lambda)}>0
\qquad\text{for all }i\in[N],\ j\in[K].
\]

Moreover,
\[
R_\beta
=
\frac{1}{K}\sum_{t=1}^T\sum_{i=1}^N\sum_{j=1}^K \beta_i p_{itj}^{(\lambda)} j
=
\rho.
\]
Indeed,
\[
R_\beta
=
(\rho-\lambda A)
+
\lambda\sum_{(i,j)\in\mathcal{A}}\frac{\beta_i j}{K}
=
(\rho-\lambda A)+\lambda A
=
\rho.
\]

Let
\[
q_\lambda:=\sum_{i=1}^N\sum_{j=1}^K p_{ij}^{(\lambda)}
=
\frac{\rho-\lambda A}{T}+\frac{\lambda\lvert\mathcal{A}\rvert}{T}
=
\frac{\rho+\lambda\Gamma}{T}.
\]
Now fix any online policy, and let $\alpha$ denote the FE-FR-$\beta$ it achieves on this instance. Summing the FE-FR-$\beta$ inequalities over all groups gives
\[
\sum_{i=1}^N \E\!\left[\sum_{t=1}^T C_{it}\right]
\ge
\alpha \sum_{i=1}^N \beta_i \E\!\left[\sum_{t=1}^T\sum_{j=1}^K S_{itj}j\right]
=
\alpha \rho K.
\]
On the other hand, the total allocation over the horizon is at most $K$, and it is zero if no request arrives at all. Therefore,
\[
\sum_{i=1}^N \E\!\left[\sum_{t=1}^T C_{it}\right]
=
\E\!\left[\sum_{i=1}^N\sum_{t=1}^T C_{it}\right]
\le
K\,\Pr(\text{at least one request arrives during the horizon}).
\]
Since arrivals are independent across time and the per-slot arrival probability is $q_\lambda$,
\[
\Pr(\text{at least one request arrives during the horizon})
=
1-(1-q_\lambda)^T.
\]
Hence any online policy achieving FE-FR-$\beta$ of $\alpha$ on this instance must satisfy
\[
\alpha
\le
\frac{1-(1-q_\lambda)^T}{\rho}
=
\frac{1-\left(1-\frac{\rho+\lambda\Gamma}{T}\right)^T}{\rho}.
\]
Since the function
\[
g(x):=1-\left(1-\frac{x}{T}\right)^T
\]
is $1$-Lipschitz on $[0,T]$, it follows that
\[
\alpha
\le
\frac{1-(1-\rho/T)^T}{\rho}
+\frac{\lambda\Gamma}{\rho}.
\]
Therefore, for every $\varepsilon>0$, choosing $\lambda>0$ small enough that
\(
\lambda\Gamma\le \rho\varepsilon
\)
yields a full-support time-invariant instance with $R_\beta=\rho$ such that
every online policy satisfies
\[
\alpha
\le
\frac{1-(1-\rho/T)^T}{\rho}
+\varepsilon.
\]
\end{example}

\begin{theorem}\label{thm:eps-upper-no-b}
Fix $\beta\in(0,1]^N$ with $\max_{i\in[N]}\beta_i=1$, and fix $\rho\in(0,T)$.
Let $\mathcal I_{\mathrm{fs}}(\rho,\beta)$ denote the class of all
time-invariant FORA-IU instances satisfying
\[
R_\beta=\rho
\qquad\text{and}\qquad
p_{ij}>0 \ \text{ for all } i\in[N],\ j\in[K].
\]
Then no online policy can guarantee an FE-FR-$\beta$ strictly larger than
\[
\frac{1-(1-\rho/T)^T}{\rho}
\]
on every instance in $\mathcal I_{\mathrm{fs}}(\rho,\beta)$.

Consequently, the optimal worst-case FE-FR-$\beta$ guarantee over
$\mathcal I_{\mathrm{fs}}(\rho,\beta)$ is exactly
\[
\frac{1-(1-\rho/T)^T}{\rho}
=
\frac{1-(1-R_\beta/T)^T}{R_\beta}.
\]
\end{theorem}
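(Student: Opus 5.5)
The proof splits into two parts: an upper bound taken from Example~\ref{exp:full-support-eps-hard}, and a matching lower bound taken from Theorem~\ref{thm:weighted-RCB}.

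\textbf{Upper bound.} Suppose some online policy $\pi$ guarantees FE-FR-$\beta$ at least $\alpha^\ast>\frac{1-(1-\rho/T)^T}{\rho}$ on every instance of $\mathcal I_{\mathrm{fs}}(\rho,\beta)$. Set
\[
\varepsilon:=\alpha^\ast-\frac{1-(1-\rho/T)^T}{\rho}>0 .
\]
Next choose $\lambda>0$ small enough that three conditions hold: $\rho-\lambda A>0$, $\rho+\lambda\Gamma\le T$, and $\lambda\Gamma\le\rho\varepsilon/2$. This is possible because $\rho<T$ and $A,\Gamma$ are fixed finite constants.

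The instance of Example~\ref{exp:full-support-eps-hard} with this $\lambda$ has all $p_{ij}>0$ and $R_\beta=\rho$, so it lies in $\mathcal I_{\mathrm{fs}}(\rho,\beta)$. The example's argument applies to every online policy. It sums the fairness inequalities over groups, uses the budget bound $K\Pr(\text{some arrival})$, and uses the $1$-Lipschitz property of $g$. This shows that the FE-FR-$\beta$ achieved by $\pi$ on this instance is at most $\frac{1-(1-\rho/T)^T}{\rho}+\varepsilon/2<\alpha^\ast$, a contradiction.

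A few routine facts should be checked along the way. First, $\Gamma\ge 0$, since each term $1-\beta_i j/K$ is nonnegative; if $\Gamma=0$, the Lipschitz step is simply vacuous. Second, the per-slot total arrival probability $q_\lambda=(\rho+\lambda\Gamma)/T$ is at most $1$, which makes the instance feasible. Third, the per-group fairness guarantee implies the aggregated inequality used in the example. That aggregation only needs the definition of FE-FR-$\beta$ and the identity $\sum_i\beta_i\E[D_i]=\rho K$.

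\textbf{Lower bound.} For the "consequently" part, Theorem~\ref{thm:weighted-RCB} shows that Algorithm~\ref{alg:weighted-rcb} achieves exactly $\frac{1-(1-R_\beta/T)^T}{R_\beta}$ on every time-invariant instance. In particular, it does so on every instance of $\mathcal I_{\mathrm{fs}}(\rho,\beta)$, where $R_\beta=\rho$. So the optimal worst-case guarantee is at least this value, and by the first part it is at most this value, hence equal.

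There is no real obstacle here, since the substantive work is already done in the example. The only delicate point is logical: the statement is about "guarantees strictly larger", so the contradiction must come from the $\varepsilon$-margin argument above rather than from a single fixed instance attaining the bound exactly. It is also worth noting that the optimum is a supremum over policies; it is attained here by weighted RCB, so the worst case equals the displayed value exactly.
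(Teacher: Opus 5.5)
Your proposal is correct and takes the same approach as the paper's proof. The lower bound comes from Theorem~\ref{thm:weighted-RCB}, and the upper bound is the same contradiction argument applied to Example~\ref{exp:full-support-eps-hard} with half of the assumed margin (your $\varepsilon/2$ plays the role of the paper's $\delta/2$), while your added checks ($\Gamma\ge 0$, $q_\lambda\le 1$, and the aggregation via $\sum_i\beta_i\E[D_i]=\rho K$) are the facts the example relies on implicitly.
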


\begin{proof}
Let
\[
L(\rho,T):=\frac{1-(1-\rho/T)^T}{\rho}.
\]

First, by Theorem~\ref{thm:weighted-RCB}, Algorithm~\ref{alg:weighted-rcb}
achieves FE-FR-$\beta$ of at least $L(\rho,T)$ for every time-invariant instance.
In particular, it achieves this guarantee for every instance in
$\mathcal I_{\mathrm{fs}}(\rho,\beta)$.

It remains to prove that no online policy can guarantee strictly more than
$L(\rho,T)$ on the whole class $\mathcal I_{\mathrm{fs}}(\rho,\beta)$.
Suppose otherwise. Then there exist an online policy $\pi$ and a constant
$\delta>0$ such that $\pi$ guarantees FE-FR-$\beta$ of at least
\(
L(\rho,T)+\delta
\)
on every instance in $\mathcal I_{\mathrm{fs}}(\rho,\beta)$.

Now apply Example~\ref{exp:full-support-eps-hard} with
\(
\varepsilon:=\delta/2.
\)
It yields an instance in $\mathcal I_{\mathrm{fs}}(\rho,\beta)$ with
$R_\beta=\rho$ such that every online policy, and hence in particular $\pi$,
must satisfy
\[
\alpha
\le
L(\rho,T)+\frac{\delta}{2}
<
L(\rho,T)+\delta.
\]
This contradicts the assumed guarantee of $\pi$.

Therefore, no online policy can guarantee an FE-FR-$\beta$ strictly larger than
$L(\rho,T)$ on every instance in $\mathcal I_{\mathrm{fs}}(\rho,\beta)$.
Combining this upper bound with the lower bound from
Theorem~\ref{thm:weighted-RCB} proves that the optimal worst-case guarantee over
$\mathcal I_{\mathrm{fs}}(\rho,\beta)$ is exactly $L(\rho,T)$.
\end{proof}

The instances in $\mathcal I_{\mathrm{fs}}(\rho,\beta)$ form a subset of the time-invariant instances with $R_\beta=\rho$, so an upper bound that holds on this subset also holds on the full class.

Theorems~\ref{thm:weighted-RCB} and~\ref{thm:eps-upper-no-b} together establish that the Weighted RCB algorithm is optimal for the class of time-invariant request distributions. The exact achievable guarantee $\frac{1 - (1 - R_\beta/T)^T}{R_\beta}$, which is always at least $\frac{1 - e^{-R_\beta}}{R_\beta}$, strictly exceeds the bound $\frac{1}{1 + R_\beta}$ that is optimal in the general time-varying setting, quantifying the value of distributional stationarity in online fair allocation.

\section{Partial Allocation versus All-or-Nothing Allocation}\label{sec:all-or-nothing}

Throughout this paper, our algorithms employ a \emph{partial allocation} rule, wherein a request may receive any integer number of units up to its demand. An alternative paradigm, widely adopted in practice, is the \emph{all-or-nothing allocation} rule: each request must either be fulfilled completely or rejected entirely—no intermediate allocations are permitted.

A natural question arises: Can online policies restricted to the all-or-nothing rule \cite{alaei2012online,alaei2013online,yang2021competitive} achieve the same optimal FE-FR-$\beta$ guarantees established by our partial allocation algorithms? Specifically, can such policies attain an FE-FR of $\frac{1}{1+R}$ in the general time-varying setting or $\frac{1-(1-R/T)^T}{R}$ in the time-invariant setting? In what follows we establish a strict separation already against the weaker lower bound $\frac{1-e^{-R}}{R}\le \frac{1-(1-R/T)^T}{R}$, which suffices to rule out matching the exact optimal guarantee.

The answer is negative, even in the unweighted case where all priority coefficients are uniform. The following theorem formalizes this separation, demonstrating that partial allocation is essential for achieving optimal performance guarantees.

\begin{theorem}\label{thm:limit-all-or-nothing}
With uniform priority coefficients (i.e., $\beta_i=1$ for all $i\in [N]$), there exist instances where online policies restricted to the all-or-nothing allocation rule strictly fail to achieve the FE-FR guarantees of $\frac{1}{1+R}$ in the general case and $\frac{1-(1-R/T)^T}{R}$ in the time-invariant case, which are attainable via partial allocation. The separation in fact holds already against the weaker lower bound $\frac{1-e^{-R}}{R}$ in the time-invariant case.
\end{theorem}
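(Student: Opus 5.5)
The plan is to exhibit two small explicit instances, one time-varying and one time-invariant, with $K=2$ units and $T=2$ slots, in which a size-$2$ request from a second group competes with unit requests from a first group. For each instance I will write an upper bound on the best FE-FR of any all-or-nothing policy as a small max--min problem and solve it. I then compare the result with the guarantees already established: Theorem~\ref{thm:non-iid} gives $\frac{1}{1+R}$, and Theorem~\ref{thm:rcb-fe-fr} gives $\frac{1-(1-R/T)^T}{R}\ge\frac{1-e^{-R}}{R}$. The key structural fact is simple. Under the all-or-nothing rule, a request of size $K=2$ can be served only if \emph{no} unit has been consumed before it. Hence any service to group $1$ before that point directly destroys group $2$'s chance.

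\textbf{General (time-varying) case.} Take $N=2$ and $K=T=2$. Group $1$ requests one unit at $t=1$ with probability $1$. Group $2$ requests two units at $t=2$ with probability $\varepsilon$. Then $R=(1+2\varepsilon)/2$, so $\frac{1}{1+R}\to\frac{2}{3}$ as $\varepsilon\to0$.

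Let $x$ be the probability that an all-or-nothing policy serves group $1$ at $t=1$. Group $1$'s ratio is exactly $x$. Group $2$ is fully served only on the event $B_2=2$, which has probability $1-x$ and is independent of the slot-$2$ arrival. So group $2$'s ratio is at most $1-x$. Therefore the FE-FR of any all-or-nothing policy is at most $\max_x\min(x,1-x)=\frac12$. Since $\frac12<\frac{1}{1+R}$ for small $\varepsilon$ (indeed for all $\varepsilon<1/2$), this gives the first separation.

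\textbf{Time-invariant case.} Take $N=2$ and $K=T=2$ again. In every slot, group $1$ requests one unit with probability $a$ and group $2$ requests two units with probability $b$. I will fix $a=\tfrac12$ and let $b\to0$, so that $R=a+2b\to\tfrac12$.

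Let $x_1,x_2$ be the conditional probabilities of serving a group-$1$ arrival at slots $1$ and $2$. Group $1$'s ratio is then $(x_1+x_2)/2\le(1+x_1)/2$. A group-$2$ request can be served in slot $1$ with probability at most $1$. In slot $2$ it can be served only if nothing was allocated in slot $1$, which has probability at most $1-ax_1$. So group $2$'s ratio is at most $(2-ax_1)/2+O(b)$.

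Balancing the two ratios gives $x_1=1/(1+a)$, and hence the all-or-nothing value is at most $\frac{2+a}{2+2a}+O(b)$. At $a=\tfrac12$ this is $\frac56+O(b)$. The partial-allocation benchmarks at $R\to\tfrac12$ are $\frac{1-(3/4)^2}{1/2}=\frac78$ and $\frac{1-e^{-1/2}}{1/2}\approx0.787$.

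\textbf{Main obstacle.} The step I expect to be delicate is the comparison in the time-invariant case. The bound $\frac56$ beats the exact finite-horizon value $\frac78$. Against the weaker target $\frac{1-e^{-R}}{R}\approx0.787$, however, $\frac56$ is \emph{larger}, so this particular pair $(K,T)=(2,2)$ does not give the separation for the weaker bound claimed in the last sentence of the theorem.

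To fix this, I will generalize to $T$ slots with group $1$ requesting one unit with probability $a$ per slot and group $2$ rarely requesting all $K$ units. Group $2$ is then served only if the budget is still untouched when its request arrives, so its ratio is at most roughly the average over $t$ of $\Pr(\text{no unit used before } t)$. Group $1$'s ratio requires consuming units early, which drives that probability down at a geometric rate $(1-ax)^{t-1}$. Balancing the two (now with a common acceptance rate $x$) gives an explicit value $\phi(a,T)$. I will then choose $T$ large and $a=c/T$ so that $\phi$ falls below $\frac{1-e^{-R}}{R}$ with $R\approx c$. If the choice $K=2$ does not suffice, I will take $K$ larger while keeping group $1$'s load dominant. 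Finally I will send $b\to0$ to absorb the $O(b)$ terms by continuity, exactly as in Example~\ref{exp:full-support-eps-hard}.
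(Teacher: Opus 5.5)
Your general-case instance is correct: it gives $\frac12<\frac{2}{3+2\varepsilon}=\frac{1}{1+R}$ for $\varepsilon<\frac12$. Your $(K,T)=(2,2)$ time-invariant instance also correctly separates against the exact value, since $\frac{2+a}{2+2a}<1-\frac{a+2b}{4}=\frac{1-(1-R/2)^2}{R}$ for $a\in(0,1)$ and small $b$. Both differ from the paper's instances, which use requests of size $\frac K2+1$ so that at most one request can ever be fully served.

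\textbf{The gap.} The gap is the theorem's last sentence, the time-invariant separation against $\frac{1-e^{-R}}{R}$. You leave it as a plan, and the plan as sketched would not give a valid bound. You obtain $\phi(a,T)$ by balancing the two ratios under a \emph{common} acceptance rate $x$, so that the untouched-budget probability decays like $(1-ax)^{t-1}$. That evaluates one subclass of policies. An impossibility claim must bound \emph{every} online all-or-nothing policy, including ones whose acceptance probability depends on $t$ and on the history. Your $T=2$ analysis correctly allowed slot-dependent $x_1,x_2$; the generalization drops this.

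Time dependence matters precisely here. Consuming the first unit late costs group~$2$ little, so accepting group-$1$ requests only after some time $s^*$ beats any constant rate. For instance, in the Poisson limit with $K=2$ and $Ta=2$, such a threshold policy balances the two ratios at about $0.62$, while the best constant rate gives only about $0.58$. So a constant-rate $\phi$ is not an upper bound. Moreover, the parameters $(a,T,K)$ are never actually fixed.

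\textbf{The missing ingredient} is a policy-independent bound. The simplest is the paper's: a single group, $T=2$, and a request of size $\frac K2+1$ in each slot with $K\ge4$ even. Any all-or-nothing policy serves at most one request, so its FE-FR is at most $\frac12$. Meanwhile $R=1+\frac2K\le\frac32$ gives $\frac{1-e^{-R}}{R}\ge\frac{1-e^{-3/2}}{3/2}>\frac12$.

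Your own family can also be rescued by an exchange argument on $\sigma$, the first slot with a positive allocation.
\begin{enumerate}
\item Group~$2$ can be served at $t$ only if $\sigma\ge t$, so its ratio is at most $\mathbb{E}[\min(\sigma,T)]/T$.
\item Group~$1$ gets at most one unit at slot $\sigma$. Since $\{\sigma<t\}$ is independent of the slot-$t$ arrival, it gets at most $a\,\mathbb{E}[(T-\sigma)^+]$ units afterwards. Hence its ratio is at most $\frac{1}{Ta}+\frac{\mathbb{E}[(T-\sigma)^+]}{T}$.
\item Because $\min(\sigma,T)+(T-\sigma)^+=T$, every all-or-nothing policy has FE-FR at most $\frac12+\frac{1}{2Ta}$.
\end{enumerate}
Take $Ta=K\ge4$ with $T\ge 2K$, so that $a\le\frac12$ and $R=1+Tb$. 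Then this bound is at most $0.625$, which lies below $\frac{1-e^{-R}}{R}\to 1-e^{-1}\approx 0.632$ once $b$ is small. This confirms your instinct to enlarge $K$, but the separation comes from this argument, not from a constant-rate computation.
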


The proof of Theorem~\ref{thm:limit-all-or-nothing} follows from the explicit constructions presented in the subsequent subsections: Theorem~\ref{thm:all-or-nothing-limit-general} establishes the separation for the general time-varying setting, and Theorem~\ref{thm:all-or-nothing-limit-iid} establishes it for the time-invariant setting.

\subsection{Impossibility in the General Case}

We construct instances in which all-or-nothing policies face an inherent capacity bottleneck that partial allocation circumvents. The key feature of this construction is that each request demands strictly more than half of the total capacity, ensuring that at most one request can ever be fully satisfied under the all-or-nothing rule.

\begin{example}\label{ex:all-or-nothing-general-T}
Let $K = \frac{1}{\epsilon}$ be the total available resource units, where $\epsilon > 0$ is chosen such that $K$ is an even integer. Consider an instance with $T$ time slots and $T$ groups.
In each time slot $t \in [T]$, a request from group $t$ arrives with probability $1$, demanding exactly $d = \frac{K}{2} + 1 = \frac{1}{2\epsilon} + 1$ units.

\end{example}

The following theorem quantifies the separation between all-or-nothing and partial allocation policies for this instance.

\begin{theorem}\label{thm:all-or-nothing-limit-general}
In the scenario described in Example~\ref{ex:all-or-nothing-general-T}, the maximum achievable FE-FR using any all-or-nothing allocation policy is exactly $\frac{1}{T}$. In contrast, Algorithm~\ref{alg:non-iid}, which uses partial allocation, achieves an FE-FR of $\frac{1}{1+R}$. If $\epsilon \leq \frac{1}{6}$ and $T>3$, then
\(
\frac{1}{T} < \frac{1}{1+R}.
\)
This demonstrates that all-or-nothing policies strictly underperform compared to partial-allocation policies in the general case.
\end{theorem}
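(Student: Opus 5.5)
The proof has three parts: an upper bound of $\frac{1}{T}$ for every all-or-nothing policy, a matching all-or-nothing policy, and a computation of $R$ that gives the strict inequality.

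\textbf{Upper bound.} Since $d=\frac{K}{2}+1>\frac{K}{2}$, any two fully served requests would use $2d=K+2>K$ units. So under the all-or-nothing rule, on every realization (of the deterministic arrivals and of the policy's internal randomness) at most one request receives a positive allocation, and that allocation equals $d$. Write $x_t:=\Pr(\text{the request of group } t \text{ is served})$. Then $\mathbb{E}[C_{tt}]=d\,x_t$ and $\sum_{t=1}^T x_t\le 1$, because the service events are pairwise disjoint. Each group's expected demand is exactly $d$, so the FE-FR of the policy is $\min_t x_t\le \frac{1}{T}$.

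\textbf{Achievability.} The bound is attained by a concrete online policy. Before time $1$, draw $\tau$ uniformly from $[T]$; at slot $\tau$ serve group $\tau$ in full, and reject every other request. The policy is online, respects the budget since $d\le K$, is all-or-nothing, and gives $x_t=\frac{1}{T}$ for every $t$. Hence the maximum achievable FE-FR for all-or-nothing policies is exactly $\frac{1}{T}$.

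\textbf{Partial allocation and the comparison.} By Theorem~\ref{thm:non-iid} (Lemma~\ref{lem:sufficiency} together with the RFE-FR bound), Algorithm~\ref{alg:non-iid} attains FE-FR $\frac{1}{1+R}$ on this instance. Here
\[
R=\frac{Td}{K}=T\Bigl(\tfrac12+\epsilon\Bigr).
\]
The claim $\frac{1}{T}<\frac{1}{1+R}$ is equivalent to $1+R<T$, that is, $1+\frac{T}{2}+T\epsilon<T$, or
\[
T\Bigl(\tfrac12-\epsilon\Bigr)>1 .
\]
Under $\epsilon\le \frac16$ we have $\frac12-\epsilon\ge\frac13$, and $T>3$ then gives $T\bigl(\frac12-\epsilon\bigr)>1$.

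The main care point is the claim that the events ``request $t$ is served'' are pairwise disjoint. It must be argued pathwise, on every realization, rather than only in expectation, and this is exactly what the strict inequality $2d>K$ supplies.
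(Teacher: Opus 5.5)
Your proposal is correct and follows essentially the same route as the paper. Both use the pathwise observation that $2d=K+2>K$ allows at most one fully served request, so $\sum_t x_t\le 1$ and $\min_t x_t\le \frac{1}{T}$, which the uniformly-random-group policy attains; both then compute $R=\frac{T}{2}+T\epsilon$ and invoke Theorem~\ref{thm:non-iid}. The only cosmetic difference is in the final inequality: you rewrite $1+R<T$ as $T\bigl(\tfrac12-\epsilon\bigr)>1$, whereas the paper bounds $1+R\le 1+\frac{2T}{3}<T$.
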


\begin{proof}
Under the all-or-nothing rule, a request is either fully served in amount
\[
d=\frac{K}{2}+1=\frac{1}{2\epsilon}+1
\]
or rejected. Since
\[
2d = K+2 > K,
\]
at most one request can be fully served over the entire horizon.

For each group $i\in[T]$, let $Y_i$ be the indicator that group $i$'s request is fully served. Then
\(
\sum_{i=1}^T Y_i \le 1
\) almost surely,
and therefore
\(
\sum_{i=1}^T \mathbb{E}[Y_i] \le 1.
\)

Group $i$ has deterministic demand $d$ and receives allocation $dY_i$. Hence its fill ratio is
\[
\frac{\mathbb{E}\left[\sum_{t=1}^T C_{it}\right]}
{\mathbb{E}\left[\sum_{t=1}^T \sum_{j=1}^K S_{itj}j\right]}
=
\frac{d\,\mathbb{E}[Y_i]}{d}
=
\mathbb{E}[Y_i].
\]
Thus any all-or-nothing policy satisfies
\[
\mathrm{FE\text{-}FR}
\le
\min_{i\in[T]} \mathbb{E}[Y_i]
\le
\frac{1}{T},
\]
where the last inequality follows from averaging.

This upper bound is attainable: before the horizon starts, choose one group uniformly at random from $[T]$ and fully serve that group if and when its request arrives, rejecting all other requests. Then each group is served with probability $1/T$, so the achieved FE-FR is exactly $1/T$.

Now consider Algorithm~\ref{alg:non-iid} with partial allocation. The load parameter is
\[
R
=
\frac{1}{K}\sum_{t=1}^T d
=
\frac{T}{K}\left(\frac{K}{2}+1\right)
=
\frac{T}{2}+T\epsilon.
\]
By Theorem~\ref{thm:non-iid}, Algorithm~\ref{alg:non-iid} achieves FE-FR $\frac{1}{1+R}$.

If $\epsilon \le \frac{1}{6}$, then
\[
1+R = 1+\frac{T}{2}+T\epsilon \le 1+\frac{2T}{3}.
\]
Since $T>3$, we have $1+\frac{2T}{3} < T$, and therefore
\(
\frac{1}{T} < \frac{1}{1+R}.
\)
This proves the strict separation.
\end{proof}

\subsection{Impossibility in the Time-Invariant Case}

We now present a simpler construction demonstrating that the separation persists even in the time-invariant setting with a single group and only two time periods.

\begin{example}\label{ex:all-or-nothing-general}
Let $K = \frac{1}{\epsilon}$ be the total available resource units, where $\epsilon \le 0.25$ is chosen such that $K$ is an even integer (i.e., $K \ge 4$). Consider a scenario with $T=2$ time slots and a single group. In each time slot, a request arrives with probability $1$, demanding exactly $d = \frac{K}{2} + 1 = \frac{1}{2\epsilon} + 1$ units. Consequently, the total demand over both time slots is deterministic and equals $\frac{1}{\epsilon} + 2$.
\end{example}

The following theorem establishes that Algorithm~\ref{alg:rcb} with partial allocation strictly outperforms all-or-nothing policies for this instance.

\begin{theorem}\label{thm:all-or-nothing-limit-iid}
In the scenario described in Example \ref{ex:all-or-nothing-general}, the maximum achievable FE-FR using an all-or-nothing allocation online policy is exactly $\frac{1}{2}$. In contrast, Algorithm \ref{alg:rcb} (RCB) achieves a strictly higher FE-FR for any $\epsilon \le 0.25$. Specifically:
\[
\text{FE-FR}_{\text{all-or-nothing}} = 0.5 \quad < \quad \text{FE-FR}_{\text{RCB}}.
\]
\end{theorem}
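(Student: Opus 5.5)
The plan is to treat the two halves of the statement separately: first pin down the all-or-nothing value exactly, then show that RCB beats it using Theorem~\ref{thm:rcb-fe-fr}.

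\textbf{All-or-nothing upper bound.} The argument mirrors the proof of Theorem~\ref{thm:all-or-nothing-limit-general}. Since $2d = K+2 > K$, at most one of the two requests can be fully served. Let $Y_t$ be the indicator that the request at slot $t$ is fully served. Then $Y_1+Y_2\le 1$ almost surely, and the single group's expected allocation is $d\,\mathbb{E}[Y_1+Y_2]\le d$. Its expected demand is $2d$, so any all-or-nothing policy has FE-FR at most $\frac12$.

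\textbf{All-or-nothing attainability.} Simply serve the first request fully; this is feasible because $d\le K$ when $K\ge 2$. The filling ratio is then exactly $d/(2d)=\frac12$. So the all-or-nothing optimum equals $\frac12$. There is only one group, so there is no averaging across groups to worry about.

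\textbf{RCB value.} The load is
\[
R=\frac{2d}{K}=1+\frac{2}{K}=1+2\epsilon,
\]
which lies in $(1,2)$, and $T=2$. Theorem~\ref{thm:rcb-fe-fr} gives
\[
\mathrm{FE\text{-}FR}(\mathrm{RCB})=\frac{1-(1-R/2)^2}{R}=\frac{R-R^2/4}{R}=1-\frac{R}{4}=1-\frac{1+2\epsilon}{4}=\frac{3-2\epsilon}{4}.
\]

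\textbf{Strict separation.} Since $\epsilon\le 0.25$, we have $\frac{3-2\epsilon}{4}\ge \frac{2.5}{4}=0.625>0.5$, which proves the claimed strict inequality.

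\textbf{Expected obstacle.} There is no real obstacle. The only point to check is that $W=R/T\le 1$, so that Lemma~\ref{lem:unit-alloc} applies and the closed form is valid. Here $W=\frac{1+2\epsilon}{2}<1$ holds. Alternatively, one can verify the value directly: the first request takes $d$ units, and the second gets the $K-d=\frac{K}{2}-1$ remaining units with expected overlap behaving accordingly. This direct check is unnecessary given the earlier theorem.
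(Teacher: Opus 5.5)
Your proof is correct, and its structure matches the paper's. The all-or-nothing bound comes from $2d=K+2>K$ forcing $Y_1+Y_2\le 1$, and the RCB side is read off Theorem~\ref{thm:rcb-fe-fr}.

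The one real difference is which part of Theorem~\ref{thm:rcb-fe-fr} you invoke. You use the exact finite-horizon value, which for $T=2$ collapses to $1-\frac{R}{4}=\frac{3-2\epsilon}{4}\ge\frac58$. The comparison with $\frac12$ is then pure arithmetic, with a comfortable margin. The paper instead uses the horizon-free lower bound $\frac{1-e^{-R}}{R}$. That requires a monotonicity argument ($g'(R)<0$) and a numerical evaluation at $R=\frac32$, and it gives only $\frac{1-e^{-3/2}}{3/2}\approx 0.518$.

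What the paper's route buys is a slightly stronger fact, advertised in Theorem~\ref{thm:limit-all-or-nothing}. All-or-nothing policies fall below even the weaker bound $\frac{1-e^{-R}}{R}$, not just below RCB's actual value on this instance. For the statement as posed (the all-or-nothing optimum equals $\frac12$ and is strictly below RCB's FE-FR), your version suffices and is cleaner.

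You also explicitly exhibit an all-or-nothing policy attaining $\frac12$ (fully serve the first request, feasible since $d\le K$). The paper leaves this implicit.
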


\begin{proof}
First, consider the all-or-nothing allocation rule. Since each request demands $d = \frac{K}{2} + 1$, satisfying two requests would require $2d = K + 2$ units, which exceeds the capacity $K$. Thus, any all-or-nothing policy can satisfy at most one request. With a total demand of $2d$, the maximum achievable FE-FR is:
\[
\text{FE-FR}_{\text{all-or-nothing}} = \frac{d}{2d} = \frac{1}{2}.
\]

Now, consider the partial allocation approach using Algorithm \ref{alg:rcb}. For this instance, the normalized load per slot is
\[
W = \frac{d}{K} = \frac{1}{2} + \epsilon,
\]
so the load parameter is $R = TW = 2W = 1 + 2\epsilon$. By Theorem \ref{thm:rcb-fe-fr}, the FE-FR achieved by RCB is lower-bounded by
\[
\text{FE-FR}_{\text{RCB}} \;\ge\; \frac{1 - e^{-R}}{R}.
\]
It therefore suffices to show that $\frac{1 - e^{-R}}{R} > \frac{1}{2}$ for $R = 1 + 2\epsilon$ with $\epsilon \le 0.25$.

Let $g(R) := \frac{1 - e^{-R}}{R}$. For $R>0$,
\[
g'(R) = \frac{(R+1)e^{-R} - 1}{R^2} < 0,
\]
where the inequality follows from $e^R > 1+R$ for all $R>0$. Hence $g$ is strictly decreasing on $(0,\infty)$.
Since $\epsilon \le 0.25$ implies $R = 1+2\epsilon \le \frac{3}{2}$, we have
\[
\frac{1 - e^{-R}}{R} \ge \frac{1 - e^{-3/2}}{3/2}>\frac{1}{2}.
\]
This proves the desired strict inequality.
\end{proof}

\section{Conclusion}

In this paper, we formalized the problem of Fair Online Resource Allocation with Indivisible Units (FORA-IU) and established optimal algorithmic frameworks for serving prioritized groups under uncertainty. By adopting the FE-FR-$\beta$ metric, we derived tight instance-dependent guarantees, achieving a fairness ratio of $\frac{1}{1+R_\beta}$ for general time-varying arrivals and a strictly stronger bound of $\frac{1 - (1 - R_\beta/T)^T}{R_\beta}$ (which is at least $\frac{1 - e^{-R_\beta}}{R_\beta}$) for stationary distributions. Moreover, our analysis revealed a theoretical separation between allocation protocols, demonstrating that policies restricted to all-or-nothing decisions are structurally incapable of matching the optimal fairness guarantees attainable through partial allocation.

Several meaningful directions remain for future investigation. First, relaxing the independence assumption to accommodate correlated or Markovian arrival processes would better capture the dynamics of real-world surges in demand. Second, while our current approach assumes known arrival probabilities, extending these results to the bandit setting, where distributions must be learned online, would significantly enhance practical applicability. Finally, investigating incentive-compatible mechanisms that prevent strategic over-reporting of demands while maintaining these fairness bounds presents an important challenge at the intersection of game theory and online optimization.

\begin{appendices}
\renewcommand{\theHequation}{A.\arabic{equation}}
\renewcommand{\theHfigure}{A.\arabic{figure}}
\renewcommand{\theHtable}{A.\arabic{table}}

\section{Supplemental material}

\subsection{Dynamic-programming computation of \texorpdfstring{$\gamma_{tj}$}{gamma_tj}}\label{sec:beta-calc}

\begin{algorithm}[H]
\caption{$\gamma$ Calculation}\label{alg:dynamic-beta}
\begin{algorithmic}[1]
\Require Parameters $K$, $N$, $T$ and $p_{itj}$ for $i \in [N]$, $t \in [T]$ and $j \in [K]$.
\Ensure $\gamma_{tj}$ for all $t \in [T]$ and $j \in [K]$.

\State Initialize $P(B_1=K)=1$ and $P(B_1=k)=0$ for all $k\in\{0,1,\dots,K-1\}$.
\State Initialize $\gamma_{1j}=1$ for all $j\in[K]$.

\For{$t = 2$ \textbf{to} $T$}
    \For{$k' = 1$ \textbf{to} $K$}
        \State Compute
        \begin{align}
           P(B_t = k')
           ={}& \sum_{k=k'+1}^{K}
           P(B_{t-1} = k)\cdot
           \frac{\sum_{i=1}^N p_{i(t-1)(k-k')}}{(1+R)\gamma_{(t-1)(k-k')}} \nonumber\\
           &+ P(B_{t-1} = k') \cdot \sum_{j = 1}^K
           \left(1-\frac{1}{(1+R)\gamma_{(t-1)j}} \right)
           \left(\sum_{i\in [N]}p_{i(t-1)j}\right)\nonumber\\
           &+ P(B_{t-1} = k') \cdot
           \left( 1 - \sum_{i=1}^N \sum_{j=1}^K p_{i(t-1)j} \right).
           \label{eq:dynamic-prob-calc}
        \end{align}
    \EndFor

    \State Compute $P(B_t = 0) = 1 - \sum_{k=1}^K P(B_t = k)$.

    \For{$j = 1$ \textbf{to} $K$}
        \State Compute
        \begin{align}\label{eq:dynamic-exp-calc}
            \gamma_{tj}
            =
            \mathbb{E}\left[\min(B_t / j, 1)\right]
            =
            \sum_{k=j}^{K} P(B_t = k)
            + \sum_{k=0}^{j-1} \frac{k}{j} P(B_t = k).
        \end{align}
    \EndFor
\EndFor
\end{algorithmic}
\end{algorithm}

\begin{prop}\label{prop:beta-correctness}
Algorithm~\ref{alg:dynamic-beta} correctly computes $\gamma_{tj} = \mathbb{E}[\min(B_t / j, 1)]$ for all $t \in [T]$, $j \in [K]$, and runs in $O(TN K^2)$ time.
\end{prop}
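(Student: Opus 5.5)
The plan is to prove correctness by induction on $t$, showing that the vector $(P(B_t=k))_{k=0}^K$ produced by Algorithm~\ref{alg:dynamic-beta} is the true law of the residual budget under Algorithm~\ref{alg:non-iid}. Once that is established, \eqref{eq:dynamic-exp-calc} is just the definition of $\mathbb{E}[\min(B_t/j,1)]$ evaluated against that law, split according to whether $B_t\ge j$ or $B_t<j$. Running time is then a direct count.

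\textbf{Base case.} At $t=1$ we have $B_1=K$ deterministically, so $\gamma_{1j}=1$. This matches the initialization.

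\textbf{Inductive step.} Assume the distribution of $B_{t-1}$ and the values $\gamma_{(t-1)j}$ are correct. By Theorem~\ref{thm:non-iid}, each $q_{(t-1)j}=\frac{1}{(1+R)\gamma_{(t-1)j}}$ is a valid probability. Condition on $B_{t-1}=k$. Since arrivals at slot $t-1$ are independent of $B_{t-1}$, the events at slot $t-1$ are mutually exclusive and split as follows:
\begin{itemize}
\item[] no arrival, with probability $1-\sum_{i,j}p_{i(t-1)j}$, giving budget $k$;
\item[] an arrival of size $j$ that is rejected by the coin, with probability $\sum_i p_{i(t-1)j}(1-q_{(t-1)j})$, giving budget $k$;
\item[] an arrival of size $j$ that is accepted, with probability $\sum_i p_{i(t-1)j}\,q_{(t-1)j}$, giving budget $k-\min(k,j)$.
\end{itemize}

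For a target $k'\ge 1$, the transition $k\to k'$ with $k>k'$ requires an accepted request of size exactly $j=k-k'$. Here $j<k$, so there is no truncation and the transition is unambiguous. Staying at $k'$ corresponds to the first two cases. Applying the law of total probability over $k$ reproduces \eqref{eq:dynamic-prob-calc} term by term.

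All truncated transitions ($j\ge k$) land at $0$. This is the main subtlety, and I would handle it by not computing $P(B_t=0)$ directly but taking the complement. This is legitimate because the transitions exhaust all outcomes, so the probabilities sum to one. Then $\gamma_{tj}$ follows as described, completing the induction.

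\textbf{Complexity.} For each $t$ and each $k'$:
\begin{itemize}
\item[] The first sum ranges over $O(K)$ values of $k$, each needing $\sum_i p_{i(t-1)(k-k')}$ at cost $O(N)$. Precomputing these column sums once per $t$ costs $O(NK)$.
\item[] The stay term is independent of $k'$ apart from the factor $P(B_{t-1}=k')$, so it can also be precomputed once per $t$ in $O(NK)$.
\end{itemize}
This gives $O(K^2+NK)$ per $t$ for the probabilities, and even without the precomputation it is at most $O(NK^2)$. The $\gamma_{tj}$ step costs $O(K^2)$ per $t$. Summing over $t$ yields $O(TNK^2)$.
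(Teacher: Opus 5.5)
Your proposal is correct and takes essentially the same route as the paper. Both arguments use induction on $t$ with the same case split (no arrival, rejected arrival, or accepted arrival of size exactly $k-k'$), recover $P(B_t=0)$ by taking the complement, and count $O(NK^2)$ operations per slot. Your extra remarks are valid but not needed for the stated bound: that truncated transitions all land at $0$, and that precomputing column sums gives the sharper $O(T(K^2+NK))$.
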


\begin{proof}
We prove correctness by induction on $t$.

For the base case $t=1$, the initial budget satisfies $B_1=K$ almost surely, so
\[
\gamma_{1j}=\mathbb{E}\left[\min(K/j,1)\right]=1
\qquad \text{for all } j\in[K].
\]

For the inductive step, assume the distribution of $B_{t-1}$ has already been computed correctly. To obtain $P(B_t=k')$, condition on the previous budget level and on the event in slot $t-1$.

If $k>k'$, then the budget can decrease from $k$ to $k'$ only if a request of demand $k-k'$ arrives and the policy accepts it. Under Algorithm~\ref{alg:non-iid}, that acceptance probability is
\[
\frac{1}{(1+R)\gamma_{(t-1)(k-k')}}.
\]
This gives the first term of \eqref{eq:dynamic-prob-calc}.

If the previous budget is already $k'$, then it remains $k'$ in either of two ways: a request arrives and is rejected, yielding the second term of \eqref{eq:dynamic-prob-calc}, or no request arrives, yielding the third term. These events are mutually exclusive and exhaustive, so \eqref{eq:dynamic-prob-calc} correctly computes the distribution of $B_t$.

Once $P(B_t=k)$ is known, \eqref{eq:dynamic-exp-calc} is exactly the definition
\[
\gamma_{tj}
=
\mathbb{E}\left[\min\left(\frac{B_t}{j},1\right)\right]
=
\sum_{k=0}^{K} P(B_t=k)\min\left(\frac{k}{j},1\right),
\]
split into the cases $k\ge j$ and $k<j$. This proves correctness.

For the running time, for each fixed $t$, computing each value $P(B_t=k')$ requires summing over at most $K$ previous budget states and, inside each term, aggregating over $N$ groups. Hence computing the full distribution of $B_t$ costs $O(NK^2)$. The subsequent computation of all $\gamma_{tj}$ values costs $O(K^2)$, which is dominated by $O(NK^2)$. Over all $T$ time steps, the total running time is therefore $O(TNK^2)$.
\end{proof}

\end{appendices}

\backmatter

\section*{Statements and Declarations}

\textbf{Competing interests.}
The authors declare that they have no competing interests.

\noindent \textbf{Data availability.}
No datasets were generated or analyzed during the current study.

\noindent \textbf{Author contributions.}
All authors contributed to the study conception, analysis, and manuscript preparation. All authors read and approved the final manuscript.

\noindent \textbf{Ethical approval.}
Not applicable. This article does not contain studies with human participants or animals performed by any of the authors.

\bibliography{jorsc_references}

\end{document}